\documentclass{article}
\usepackage{amsmath,amscd,amstext,amssymb,amsfonts}
\usepackage{amsthm}
\usepackage{graphics}
\usepackage{latexsym,empheq}
\usepackage{color,xcolor}
\usepackage{bookman}
\usepackage{bm}
\usepackage{cite}
\usepackage{enumerate}

\textheight24cm
\textwidth16cm
\oddsidemargin0cm
\evensidemargin0cm
\topmargin-2cm
\sloppy

\newcommand{\nd}{\ensuremath{{\color{red}d}}} 
\renewcommand{\nd}{\ensuremath{{d}}} 

\newcommand{\beq}{\begin{eqnarray}}
\newcommand{\eeq}{\end{eqnarray}}
\newcommand{\beqq}{\begin{eqnarray*}}
\newcommand{\eeqq}{\end{eqnarray*}}
\newcommand{\bit}{\begin{itemize}}
\newcommand{\eit}{\end{itemize}}
\newcommand{\benu}{\begin{enumerate}}
\newcommand{\eenu}{\end{enumerate}}
\newcommand{\ds}{\displaystyle}
\newcommand{\R}{\ensuremath{\mathbb R}}\newcommand{\real}{\R}
\newcommand{\Rn}{\ensuremath{{\mathbb R}^{\nd}}} \newcommand{\rn}{\Rn}
\newcommand{\N}{{\ensuremath{\mathbb N}}}\newcommand{\nat}{\N}
\newcommand{\No}{\ensuremath{\N_{0}}}\newcommand{\no}{\N_0}
\newcommand{\Z}{\mathbb Z} 
\newcommand{\Zn}{\Z^{\nd}}

\newcommand{\cD}{\mathcal{D}}
\newcommand{\Dd}{\mathrm{D}}
\newcommand{\dint}{\;\mathrm{d}}


\newcommand{\llqe}{\ensuremath{\ell_{q_1}(\beta_j \ell_{p_1}^{M_j})}}
\newcommand{\llqz}{\ensuremath{\ell_{q_2}(\ell_{p_2}^{M_j})}}

\newcommand{\A}{\ensuremath{A^s_{p,q}}}  
\newcommand{\Ae}{\ensuremath{A^{s_1}_{p_1,q_1}}}  
\newcommand{\Az}{\ensuremath{A^{s_2}_{p_2,q_2}}}  
\newcommand{\B}{\ensuremath{B^s_{p,q}}}  
\newcommand{\Bsigmae}{\ensuremath{B^\sigma_{p_1,q_1}}}  
\newcommand{\Bsigmaz}{\ensuremath{B^\tau_{p_2,q_2}}}  
\newcommand{\Fsigmae}{\ensuremath{F^\sigma_{p_1,q_1}}}  
\newcommand{\Fsigmaz}{\ensuremath{F^\tau_{p_2,q_2}}}

\newcommand{\be}{\ensuremath{B^{s_1}_{p_1,q_1}}}  
\newcommand{\bz}{\ensuremath{B^{s_2}_{p_2,q_2}}}  
\newcommand{\F}{\ensuremath{F^s_{p,q}}}

\def\supp{\mathop{\rm supp}\nolimits}
\newcommand{\bpr}{\begin{proof}}
\newcommand{\epr}{\end{proof}}
\newcommand{\bli}{\begin{list}{}{\labelwidth6mm\leftmargin8mm}}
\newcommand{\eli}{\end{list}}

\def\id{\mathop{\rm id}\nolimits}
\def\ext{\mathop{\rm ext}\nolimits}
\def\re{\mathop{\rm re}\nolimits}



\newcommand{\nn}[1]{\ensuremath \nu( #1)}
\newcommand{\tn}{\ensuremath\mathbf{t}}


\allowdisplaybreaks[1]
\numberwithin{equation}{section}

\newtheorem{lemma}{Lemma}[section]
\newtheorem{corollary}[lemma]{Corollary}
\newtheorem{proposition}[lemma]{Proposition}
\newtheorem{theorem}[lemma]{Theorem}

\theoremstyle{definition}
\newtheorem{definition}[lemma]{Definition}
\newtheorem{example}[lemma]{Example}
\newtheorem{Exams}[lemma]{Examples}
\newtheorem{Exam}[lemma]{Example}

\theoremstyle{remark}
\newtheorem{remark}[lemma]{Remark}

\begin{document}

\title{Nuclear and compact embeddings in function spaces of generalised smoothness}

\author{Dorothee D. Haroske\footnotemark[1], \ Hans-Gerd Leopold, \ Susana D. Moura\footnotemark[2], \\ and Leszek Skrzypczak\footnotemark[1]}


\date{\slshape Dedicated to Oleg V. Besov on the occasion of his 90th birthday}

\maketitle

\footnotetext[1]{Both authors were partially supported by the German Research Foundation (DFG), Grant no. Ha 2794/8-1.}
\footnotetext[2]{The author was partially supported by the Center for Mathematics of the University of Coimbra-UIDB/00324/2020, funded by the Portuguese Government through FCT/MCTES.}

{

{  \begin{abstract}
    We study nuclear embeddings for function spaces of generalised smoothness defined on a bounded Lipschitz domain $\Omega\subset\Rn$. This covers, in particular, the well-known situation for spaces of Besov and Triebel-Lizorkin spaces defined on bounded domains as well as some first results for function spaces of logarithmic smoothness. In addition, we provide some new, more general approach to compact embeddings for such function spaces, which also unifies earlier results  in different settings, including also the study of their entropy numbers. Again we rely on suitable wavelet decomposition techniques and the famous Tong result (1969) about nuclear diagonal operators acting in $\ell_r$ spaces, which we could recently extend to the vector-valued setting needed here. 
  \end{abstract}

\noindent{\em Keywords:} ~ nuclear embeddings, function spaces of generalised smoothness, compact embeddings, entropy numbers\\
  
\noindent{\bfseries MSC} (2010): 46E35,  41A46, 47B10 \\

  \section{Introduction} \label{sect-intro}
  Our main intention in this paper is to characterise nuclear embeddings between function spaces of generalised smoothness. We were motivated by the nice paper \cite{CoDoKu}, but now we are able to deal with the topic in a much more general setting sealing also some small gaps left open there. In other words, we 
  combine two well-established concepts in functional analysis, that is, the theory of function spaces of generalised smoothness as well as the theory of nuclear operators, benefit from some rather recent methods (wavelet decompositions, sequence space results) and finally obtain a criterion for the nuclearity of continuous embedding operators of the type
  \begin{equation}\label{i-0}
    \id_\Omega : \Bsigmae(\Omega) \hookrightarrow \Bsigmaz(\Omega).
    \end{equation}
Here $\Omega$ is a bounded Lipschitz domain and $B^\sigma_{p,q}(\Omega)$ are Besov spaces of generalised smoothness defined by restriction to $\Omega$, where $\sigma = (\sigma_j)$ represents the smoothness, and $1\leq p,q\leq \infty$. As far as we know this question has not been studied before apart from some special cases. Our proof turns out to be surprisingly simple and straightforward, once all the necessary preparations are available.

The study of spaces of generalised smoothness has a long history, resulting on
one hand from the interpolation side
(with a function parameter), see  \cite{Me} and \cite{CoFe}, whereas the rather abstract approach
(approximation by series of entire analytic functions and coverings as well as by differences and moduli of continuity) was
independently developed by {Gol'dman} and {Kalyabin} in the late 70's
and early 80's of the last century; we refer to the
survey \cite{KaLi87} and the appendix \cite{lizorkin} which cover the
extensive (Russian) literature at that time. We rely on the Fourier-analytical
approach as presented in \cite{BrMo03,CaFa04,CaLe1,FaLe,Mo1,Mo2,Mo3}. There one can also find further motivation for the continuous interest in the study of such spaces: its connection with applications for pseudo-differential operators (as generators of sub-Markovian semi-groups) or the study of trace
spaces on fractals, that is, so-called $h$-sets $\Gamma$. Plainly such  applications and also the topic in its full generality are out of the scope of the present paper.

It is well known that a generalisation of the Besov spaces $\B(\rn)$ and the Triebel-Lizorkin spaces $\F(\rn)$ can be given by a more general weight sequence $(\sigma_j)_{j\in\No}$ instead of the sequence $(2^{js})_{j\in\No}$, $s\in\real$, representing the smoothness, and an almost strongly increasing sequence $(N_j)_{j\in\No}$ instead of the dyadic sequence $(2^j)_{j\in\No}$ in the definition of the resolution of unity. We refer to \cite{FaLe} for the history of these spaces and further details. 
Thanks to the {\em standardization procedure} used in \cite{CaLe2} one can restrict the more general setting of partitions of $\rn$ to the dyadic one, at the expenses of adapted smoothness sequences.

The purpose of the present paper is to compare and unify different approaches to spaces of generalised smoothness, collect and extend known finding about the compactness of embedding operators of type \eqref{i-0}, and to study the nuclearity of embeddings of type \eqref{i-0}. Grothendieck introduced the concept of nuclearity in \cite{grothendieck} a long time ago. It can be seen as the basis for many famous developments in functional analysis afterwards, like the influential paper of Enflo \cite{enflo} solving the approximation problem, a long-standing problem of Banach from the Scottish Book. We refer to \cite{Pie-snumb,Pie-op-2}, and, in particular, to \cite{pie-history} for further historic details.

Let $X,Y$ be Banach spaces, $T\in \mathcal{L}(X,Y)$ a linear and bounded operator. Then $T$ is called {\em nuclear}, denoted by $T\in\mathcal{N}(X,Y)$, 
if there exist elements $a_j\in X'$, the dual space of $X$, and $y_j\in Y$, $j\in\mathbb{N}$, such that $\sum_{j=1}^\infty \|a_j\|_{X'} \|y_j\|_Y < \infty$ 
and a nuclear representation $Tx=\sum_{j=1}^\infty a_j(x) y_j$ for any $x\in X$. Together with the {\em nuclear norm}
\[
 \nn{T}=\inf\Big\{ \sum_{j=1}^\infty   \|a_j\|_{X'} \|y_j\|_Y:\ T =\sum_{j=1}^\infty a_j(\cdot) y_j\Big\},
  \]
  where the infimum is taken over all nuclear representations of $T$, the space $\mathcal{N}(X,Y)$ becomes a Banach space. It is obvious that 
	nuclear operators are, in particular, compact.
	
  Already in the early years there was a strong interest to study examples of nuclear operators beyond diagonal operators in $\ell_p$ 
	sequence spaces, where a complete answer was obtained in \cite{tong}.
	Concentrating on embedding operators in spaces of Sobolev type, first results can be found, for instance, in \cite{PiTri,Pie-r-nuc}. We noticed an increased interest in studies of nuclearity in the last years.  Dealing with the Sobolev embedding for spaces on a bounded domain, some of the recent papers we 
have in mind are \cite{EL-4,EGL-3,Tri-nuclear,CoDoKu,CoEdKu,HaLeSk,HaSk-nuc-morrey,HaSkTri-nuc} using quite different techniques however. 

There might be several reasons for this. For example, the problem to describe a compact operator outside the Hilbert space setting is a partly
 open and very important one. 
 It is well known from the remarkable Enflo result \cite{enflo} that there are compact operators
 between Banach spaces which cannot be approximated by finite-rank operators.
 This led to a number of -- meanwhile well-established  and famous -- methods to circumvent this difficulty and find alternative ways to `measure' the compactness or `degree' of compactness of an operator. It can be described, for instance, by the asymptotic behaviour of its approximation and entropy numbers, 
which are basic tools for many different problems nowadays like the eigenvalue distribution of compact operators in Banach spaces, 
optimal approximation of Sobolev-type embeddings, but also for numerical questions. In all these problems, the decomposition
of a given compact operator into a series is an essential proof technique. It turns out that in many of the recent contributions studying nuclearity as mentioned above, a key tool in the arguments are new decomposition techniques as well, adapted to the different spaces. Inspired by the nice paper \cite{CoDoKu} we also used such arguments in our papers \cite{HaSk-nuc-weight,HaLeSk,HaSk-nuc-morrey}, and intend to follow this strategy here again.

It is well-known that function spaces of Besov or Triebel-Lizorkin type, also in the setting of generalised smoothness, defined on $\Rn$ never admit a compact, let alone nuclear embedding. But replacing $\Rn$ by a bounded Lipschitz domain $\Omega\subset\Rn$, then the question of nuclearity in the scale of Besov and Triebel-Lizorkin spaces has already been solved, cf. \cite{Pie-r-nuc} (with a forerunner in \cite{PiTri}) for the sufficient conditions, and 
 \cite{Tri-nuclear} with some forerunner in \cite{Pie-r-nuc} and partial results in \cite{EGL-3,EL-4} for the necessity of the conditions.  More precisely, for Besov spaces on bounded Lipschitz domains, $B^s_{p,q}(\Omega)$, it is well known that
\[
\id_\Omega^\ast : \be(\Omega) \hookrightarrow \bz(\Omega)\] 
\text{is nuclear if, and only if,} 
\[    s_1-s_2 > \nd-\nd \max\left(\frac{1}{p_2}-\frac{1}{p_1},0\right),
\]
where $1\leq p_i,q_i\leq \infty$, $s_i\in\real$, $i=1,2$. The counterpart for spaces of type $B^\sigma_{p,q}(\Omega)$, reads now as follows, see Theorem~\ref{theorem-nuclearity} below: let $1\leq p_i,q_i\leq \infty$, $i=1,2$, and  $\sigma=(\sigma_j)_{j\in\N_0}$, $\tau=(\tau_j)_{j\in\N_0}$ be admissible sequences. Then the embedding 
  \begin{equation*}
  \id_\Omega: \Bsigmae(\Omega)\hookrightarrow \Bsigmaz(\Omega)
  \end{equation*}
 is nuclear if, and only if,
  \begin{equation*}
     \left( \sigma_j^{-1} \tau_j \, 2^{j\nd(\frac{1}{p_1}-\frac{1}{p_2})} 2^{j\nd{\frac{1}{\tn(p_1,p_2)}}}\right)_{j\in\no} \in \ell_{\tn(q_1,q_2)},
  \end{equation*}
    where for $\tn(q_1,q_2)=\infty$ the space $\ell_\infty$ has to be replaced by $c_0$. Here the number $\tn(r_1,r_2)$ is defined via
$$ \frac{1}{\tn(r_1,r_2)} = \begin{cases}
    1, & \text{if} \ 1\leq r_2\leq r_1\leq \infty, \\
    1-\frac{1}{r_1}+\frac{1}{r_2}, & \text{if} \ 1\leq r_1\leq r_2\leq \infty.
  \end{cases}
  $$
  Clearly the two above-mentioned results for $\id_\Omega^\ast$ and $\id_\Omega$ coincide in case of $\sigma_j=2^{j s_1}$, $\tau_j=2^{js_2}$. We obtained parallel results in the context of spaces $F^\sigma_{p,q}(\Omega)$, see Corollary~\ref{Cor-nuc-F} below.

Aiming finally at nuclearity results, we reviewed the many existing partial results about compactness of the operator $\id_\Omega$ which exist in the literature. Applying the same technique as later for the nuclearity outcome, we unified many of the partial results in Theorem~\ref{thm-comp} below: $\id_\Omega$ is compact if, and only if, 
\[
			\left( \sigma_j^{-1} \tau_j\, 2^{j\nd(\frac{1}{p_1}-\frac{1}{p_2})} 2^{j\nd{\frac{1}{p*}}}\right)_{j\in\no} \in \ell_{q*},
\]
where the space $\ell_\infty$ has to be replaced by $c_0$  if $q^*=\infty$. Here $p^\ast$ and $q^\ast$ are the numbers given by
\[\frac{1}{p^\ast}=\max\left(\frac{1}{p_2}-\frac{1}{p_1}, 0\right), \qquad 
  \frac{1}{q^\ast}=\max\left(\frac{1}{q_2}-\frac{1}{q_1}, 0\right),\]
where we may admit $0<p_i, q_i\leq \infty$, $i=1,2$, here. We also obtain a result on the asymptotic behaviour of the entropy numbers $e_k(\id_\Omega)$, see Theorem~\ref{thm-comp} below. It is interesting to note, that parallel to some earlier findings, we observe the same phenomenon again: the criterion for compactness and nuclearity become literally the same, when $p^\ast$ is replaced by $\tn(p_1,p_2)$, and $q^\ast$ by $\tn(q_1,q_2)$.	

  As already indicated, we follow here the general ideas presented in \cite{CoDoKu} which use decomposition techniques and benefit from  Tong's result \cite{tong} about nuclear diagonal operators acting in sequence spaces of type $\ell_p$. This has been extended in \cite{HaLeSk} to vector-valued sequence spaces which paves the way for our present argument.

    The paper is organised as follows. In Section~\ref{sec-spaces} we introduce the concept of spaces of generalised smoothness, recall essential findings, and discuss several approaches which one can find in the literature. In Section~\ref{sec-compact} we present criteria for the embedding of type \eqref{i-0} to be compact, cf. Theorem~\ref{thm-comp}. This generalises a number of special results which we recall below. In particular, we can also provide some general results concerning the asymptotic behaviour of entropy numbers. We discuss some special limiting situations not yet covered by that result, in contrast to different special settings. Our main result about the nuclearity of $\id_\Omega$  can be found in Section~\ref{sec-nuc}.
      }

\section{Function spaces of generalised smoothness -- basic properties and different approaches} \label{sec-spaces}
First we fix some notation. By $\N$ we denote the \emph{set of natural numbers}, by $\N_0$ the set $\N \cup \{0\}$. For $a\in\real$, let   
$a_+:=\max\{a,0\}$.
All unimportant positive constants will be denoted by  $c$, occasionally with subscripts. By the notation 
$$
a_k \sim b_k \quad \mbox{or} \quad \varphi(x) \sim \psi(x)
$$
we always  mean that there are two positive numbers $c_1$ and $c_2$
such that
$$
c_1\,a_k \leq b_k \leq c_2\, a_k \quad \mbox{or} \quad
c_1\,\varphi(x) \leq \psi(x) \leq c_2\,\varphi(x)
$$
for all admitted values of the discrete variable $k$ or the
continuous variable $x$, where $(a_k)_k$, $(b_k)_k$ are
non-negative sequences and $\varphi$, $\psi$ are non-negative
functions. Such sequences or functions are called to be {\it equivalent}.
The symbol $\hookrightarrow$ is used for a continuous embedding from one space into another.




\subsection{Admissible sequences}

We will define function spaces of generalised smoothness based on  the Fourier-analytic characterisation of function
spaces that use a suitable resolution of unity on the Fourier side and a suitable weighted summation of the resulting parts. We will follow the approach from  \cite{FaLe}. To control the support of elements of the resolution of unity and describe the smoothness weights we will use some  sequences, therefore  we start with a definition of  sequences that will be used as  parameters.

 \begin{definition}
 A sequence  $\sigma =(\sigma _j)_{j\in\No}$, with $\sigma _j>0$,
 is called \emph{ an admissible sequence} if there are two constants
 $0<d_0 =d_0(\sigma )\le d_1=d_1(\sigma )<\infty$ such that
\begin{equation}
\label{sigma}
                  d_0\, \sigma _j\le \sigma _{j+1}\le d_1\sigma _j
   \quad \mbox{for any}\quad j\in\No .
\end{equation}
If $d_0>1$, then the sequence will be  called \emph{strongly increasing admissible sequence}.  
  \end{definition}


  \begin{Exams}\label{Ex-2.2}
To illustrate the flexibility of \eqref{sigma} we refer the reader to
some examples discussed in \cite{FaLe} or \cite[Chap. 1]{Bri-tese}.
We give here only the standard example $\sigma^{(1)}=(\sigma_j^{(1)})_{j\in\nat}$ with 
$$ \sigma^{(1)}_j = 2^{sj} \log(1+j)^b, \quad j\in\nat,$$
where $s,b\in\real$. A second, more elaborate example can be found in \cite[Example 3]{Leo-Sk-3}. Let $ 0 \le s_0<s_1$.  
Consider the subsequence $j_{l} = 2^l$ 
and a sequence $(\sigma^{(2)}_j)_{j\in\No}$ defined by
\begin{align}\label{ex-2}
  \begin{cases}
   \ds ~ \sigma^{(2)}_{j_{2l}} ~ := 2^{j_{2l} \frac{2s_1+s_0}{3}}, & \text{and}\quad \sigma^{(2)}_j :=\sigma_{j_{2l}} \,2^{(j-j_{2l})s_0} \qquad~~\text{if}\quad j_{2l} \le j < j_{2l+1}, \\[1ex]
    \sigma^{(2)}_{j_{2l+1}} := 2^{j_{2l+1} \frac{s_1+2s_0}{3}}, & \text{and}\quad
\sigma^{(2)}_j := \sigma_{j_{2l+1}} \,2^{(j-j_{2l+1})s_1}\quad\text{if}\quad j_{2l+1} \le j < j_{2l+2}. \end{cases}
\end{align}
\end{Exams}

To describe  properties of admissible sequences Bricchi and Moura introduced in \cite{BrMo03} the following notion of  Boyd indices of  admissible sequences. 

\begin{definition}
Let  $\sigma =(\sigma _j)_{j\in\No}$ be an admissible sequence and let  
$$
  \overline{\sigma}_j:=  \sup\limits _{k\ge 0}\frac{\sigma _{j+k}}{\sigma
_k}
\quad\mbox{and}\quad
 \underline{\sigma}_j:= \inf\limits _{k\ge 0}\frac{\sigma _{j+k}}{\sigma _k}, \quad j\in\No.
$$
Then the expressions 
\begin{equation}
\label{Boyd} 
  \alpha _{\sigma}:=\inf \limits _{j\in\N} \frac{\log_2  \overline{\sigma}_j}{j} = \lim_{j \to \infty } \frac{\log_2  \overline{\sigma}_j}{j}
\quad\mbox{and}\quad
   \beta _{\sigma}:=\sup \limits _{j\in\N} \frac{\log_2  \underline{\sigma}_j}{j} = \lim _{j\to \infty} \frac{\log_2  \underline{\sigma}_j}{j}
\end{equation}
define, respectively, the {\em upper} 
and  the {\em lower} 
{\em Boyd index  of the sequence} $\sigma $.
\end{definition}

\begin{remark} \label{estimations sigma}
  Plainly,
  $$ 
\log_2 d_0 \le \beta_\sigma \le \alpha_\sigma \le \log_2 d_1,
$$
and for each $\varepsilon > 0$ there exist constants
$c_{0,\varepsilon}
> 0 $ and $c_{1,\varepsilon}  > 0  $ such that for all $j\in\nat$,
\begin{equation}\label{alpha-beta1}
c_{0,\varepsilon}\; 2^{(\beta_\sigma - \varepsilon)j} \le \underline{\sigma}_j \le \sigma_j  \le \overline{\sigma}_j  \le 
c_{1,\varepsilon}\; 2^{(\alpha_\sigma + \varepsilon)j} \;\;.\end{equation}  

 Moreover,  it is easy to see that the Boyd indices of an admissible sequence
    $\sigma$ remain unchanged
    when replacing $\sigma$ by an equivalent
    sequence.
\end{remark}

\begin{Exams} We return to Examples~\ref{Ex-2.2}.
  For $\sigma^{(1)}= (\sigma^{(1)}_j)_j$ with  
  $\sigma^{(1)}_j = 2^{sj} \log(1+j)^b$, $s,b\in\real$, one obviously obtains  $\alpha_{\sigma^{(1)}} = \beta_{\sigma^{(1)}} = s$. But in the second example \eqref{ex-2}, we find that  
$$ \overline{\sigma}^{(2)}_j=  \sup\limits _{k\ge 0}\frac{\sigma^{(2)} _{j+k}}{\sigma^{(2)}
_k} = 2^{js_1} \quad \text{and}\quad
\underline{\sigma}^{(2)}_j= \inf\limits _{k\ge 0}\frac{\sigma^{(2)} _{j+k}}{\sigma^{(2)} _k}= 2^{js_0},$$
consequently $\alpha_{\sigma^{(2)}} = s_1 > \beta_{\sigma^{(2)}} = s_0$ .
\end{Exams}

\subsection{Function spaces of generalised smoothness}
Now we define the function spaces we will work with in the sequel.  In what follows let 
${\cal S}(\Rn)$ denote the Schwartz space of all complex-valued
rapidly decreasing
infinitely differentiable functions on $\Rn$ equipped with the usual
topology and ${\cal S}'(\Rn)$ be its topological dual, the space
of all tempered distributions on $\Rn$. Moreover, let ${\cal F}$ and ${\cal F}^{-1}$ stand respectively for the Fourier transform and its inverse. We choose two admissible sequences   $\sigma=(\sigma_j)_{j\in\N_0}$ and   $N=(N_j)_{j\in\N_0}$ such that 
\begin{align}\label{sigma2}
	    d_0\, & \sigma _j\le \sigma _{j+1}\le d_1\sigma _j  \quad
	    \text{for}\quad d_1\geq d_0>0\quad \text{and any}\quad j\in \No,\\
\intertext{and} \label{N}
\lambda _{0}\, & N_j \le N_{j+1}\le \lambda _1 N_j \quad \text{for}\quad \lambda_1\geq \lambda_0>1\quad \text{and any}\quad
j\in \No,
\end{align} 
i.e., we assume the sequence $N$ not only to be admissible, but also strongly increasing.

Now we can define the suitable resolution of unity.   
 \begin{definition}
 \label{def-decomp}
 For a fixed admissible sequence
 $N=(N_j)_{j\in\N_0}$ satisfying assumption \eqref{N}
 let $\Phi ^{N}$ be the collection of all
 function systems $\varphi^{N}=(\varphi_j^{N})_{j\in\No}$
 satisfying the following conditions:
 \benu[\bfseries\upshape (i)]
\item
   $\varphi_j^{N}\in C_0^\infty(\Rn) \quad\mbox{and}\quad
                  \varphi _j^{N}(\xi )\ge 0 \quad\mbox{if}\quad \xi\in\Rn,
                 \quad\mbox{for any}\quad j\in\No$;
\item
  there exists a number $\kappa_0\in\nat$ with $2 \le
\lambda_0^{\kappa_0}$ such that
  \[
    \begin{cases}
 \supp \varphi _j^{N} \subset \{\xi\in\Rn \, :\, |\xi |\le  N_{j+\kappa_0}\}, & j = 0, 1, \dots, \kappa_{0}-1,\\[1ex] 
 \supp \varphi _j^{N} \subset \{\xi\in\Rn \, :\, N_{j-\kappa_0}\le |\xi |\le N_{j+\kappa_0}\}, & j \ge \kappa_0;
 \end{cases} \]
\item for any $\gamma\in\N_0^\nd$ there exists a constant
$c_{\gamma}>0$
        such that for any $j\in\N_0$
       \begin{equation*}
           \label{fl-i3}
          | \Dd^{\gamma}\varphi _j^{N}(\xi )|
 \le c_{\gamma}\, 
 (1+|\xi|^2)^{-|\gamma |/2}
         \quad\mbox{for any}\quad \xi\in\Rn ;
        \end{equation*}
\item there exists a constant $c_{\varphi}>0$ such that
 \begin{equation*}
 \label{fl-i4}
            0< \sum\limits _{j=0}^{\infty}\varphi _j^{N}(\xi )
 =c_{\varphi}<\infty
             \quad\mbox{for any}\quad \xi\in\Rn   .
 \end{equation*}
\eenu
\end{definition}

 \begin{remark}
 	If $N_j=2^j$, $j\in \N_0$, then the function system $\varphi^{N}=(\varphi_j^{N})_{j\in\No}$ is a classical smooth dyadic resolution of unity. In that situation we will omit the superscript $N$ and will write  $(\varphi_j)_{j\in\No}$ instead of $(\varphi_j^{N})_{j\in\No}$. 
 \end{remark}

\begin{definition} \label{defbf2}
	Let $\sigma=(\sigma_j)_{j\in\N_0}$ and $N=(N_j)_{j\in\N_0}$ be admissible
	sequences and assume that $N=(N_j)_{j\in\N_0}$ satisfies \eqref{N}. Moreover, let $\varphi ^{N}\in\Phi ^{N}$.
  \benu[\bfseries\upshape (i)]
\item Let $0<p\le \infty$ and $0 <q\le\infty$.
 The {\em Besov space of generalised smoothness} $B^{\sigma ,N}_{p,q}(\Rn)$  is defined as
 $$
 \Big\{
 f\in {\cal S}'(\Rn)\,:\,
  \Big\|f\,|\, B^{\sigma ,N}_{p,q}(\Rn) \Big\| :=
    \Big(\sum\limits _{j=0}^{\infty}
            \sigma_j^q \, \|{\cal F}^{-1}\, (\varphi_j^N {\cal F}f)|L_p(\Rn)\|^q\,
    \Big)^{1/q} <\infty\Big\},
 $$
with the usual modification when $q=\infty$.
\item
             Let $0<p< \infty$ and $0 < q\le \infty$.
 The {\em Triebel-Lizorkin space  of generalised smoothness} $F^{\sigma , N}_{p,q}(\Rn)$ is defined as
 $$
 \Big\{
 f\in {\cal S}'(\Rn)\,:\,
 \|f\,|\, F^{\sigma ,N}_{p,q}(\Rn)\| :=
    \Big\|
    \Big(\sum\limits _{j=0}^{\infty}
            \sigma_j^q \, |{\cal F}^{-1}\, (\varphi_j^N {\cal F}f)(\cdot)|^q\,
    \Big)^{1/q}
   |\, L_p(\Rn) \Big\|
 <\infty\Big\},
 $$
with the usual modification when $q=\infty$.
\eenu
\end{definition}

\begin{remark}
  Both  $B^{\sigma ,N}_{p,q}(\Rn)$ and $F^{\sigma , N}_{p,q}(\Rn)$
	are (quasi)-Banach spaces, Banach if $\min(p,q)\ge 1$, which are independent of the choice of the
	system $(\varphi_j ^{N} )_{j\in \N_0}$ and with respect to equivalent sequences, in the sense of equivalent quasi-norms. As in the classical case,
	the embeddings
	${\cal S}(\Rn) \hookrightarrow B^{\sigma , N}_{p,q}(\Rn)\hookrightarrow{\cal S}'(\Rn)$ and
	${\cal S}(\Rn) \hookrightarrow F^{\sigma ,N}_{p,q}(\Rn)  \hookrightarrow{\cal S}'(\Rn)$ hold true for all admissible
	values of the parameters and sequences.
	If $ p,q<\infty$, then ${\cal S}(\Rn)$
	is dense in $B^{\sigma , N}_{p,q}(\Rn)$ and in $F^{\sigma ,N}_{p,q}(\Rn)$.
	Moreover, it is clear that $B^{\sigma ,N}_{p,p}(\Rn)=F^{\sigma ,N}_{p,p}(\Rn)$.
        
Note also that if $N=(2^j)_{j\in\N_0}$ and $\sigma = \sigma ^s := (2^{js})_{j\in\No}$
	with $s$ real,
	then
	the above spaces coincide with the usual function spaces $B^s_{p,q}(\Rn)$
	and $F^s_{p,q}(\Rn)$ on $\Rn$,  respectively.
We shall use the simpler notation $B^s_{p,q}(\Rn)$ and $F^s_{p,q}(\Rn)$ in the more classical situation just mentioned.
Even for general admissible $\sigma$, when  $N=(2^j)_{j\in\N_0}$ we shall write simply $F^\sigma_{p,q}(\Rn)$ and $B^\sigma_{p,q}(\Rn)$ instead of $F^{\sigma ,N}_{p,q}(\Rn)$ and $B^{\sigma ,N}_{p,q}(\Rn)$, respectively.

We refer to \cite{FaLe} for further characterisations of the spaces, in particular, characterisation by local means and atomic decompositions. There one can also find some discussion concerning the properties of the sequences we use in Definition~\ref{defbf2}, in particular, the importance of the assumption \eqref{N}. 	
\end{remark}

We have the following useful relation between generalised $B$ and $F$ spaces in analogy to the classical elementary embeddings. 

\begin{proposition}\label{BF}
Let $0<p< \infty$, $0 < q \leq \infty$. Let $N$ and $\sigma$ be admissible sequences with $N$ satisfying \eqref{N}. Then
\begin{equation} \label{embBFB}
B^{\sigma,N}_{p,\min\{ p,q \}}(\Rn) \hookrightarrow F^{\sigma,N}_{p,q}(\Rn) \hookrightarrow B^{\sigma,N}_{p,\max\{ p,q \}}(\Rn).
\end{equation}
\end{proposition}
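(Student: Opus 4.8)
The plan is to reduce everything to the classical elementary embeddings between Besov and Triebel--Lizorkin spaces by exploiting the fact that, pointwise in $x$, the quantities appearing in the definitions of $B^{\sigma,N}_{p,q}(\Rn)$ and $F^{\sigma,N}_{p,q}(\Rn)$ are just weighted $\ell_q$-norms of the sequence $\bigl(({\cal F}^{-1}(\varphi_j^N{\cal F}f))(x)\bigr)_{j\in\No}$, with the same weight $(\sigma_j)_{j}$ in both cases. In other words, the generalised smoothness enters only through a fixed multiplicative weight on the sequence, and the nesting \eqref{embBFB} is a purely sequence-space phenomenon: for any fixed weight sequence the map that multiplies the $j$-th coordinate by $\sigma_j$ is an isometry between the relevant weighted and unweighted sequence spaces, so it suffices to prove the two embeddings for $\sigma_j\equiv 1$ and then transfer. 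This observation makes the statement essentially independent of $N$ as well, since $N$ only governs the supports of the $\varphi_j^N$ and plays no role in the summation structure.

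Concretely, I would argue as follows. First I would recall (or cite \cite{FaLe}) that $B^{\sigma,N}_{p,q}(\Rn)$ and $F^{\sigma,N}_{p,q}(\Rn)$ are independent of the admissible resolution of unity $\varphi^N\in\Phi^N$ up to equivalent quasi-norms, so we may fix one system $\varphi^N$ and work with the explicit quasi-norms in Definition~\ref{defbf2}. Writing $f_j:={\cal F}^{-1}(\varphi_j^N{\cal F}f)$, the Besov quasi-norm is $\bigl\|(\sigma_j\|f_j\,|\,L_p\|)_j\,|\,\ell_q\bigr\|$ and the Triebel--Lizorkin quasi-norm is $\bigl\|(\sigma_j f_j(\cdot))_j\,|\,L_p(\ell_q)\bigr\|$. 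For the right-hand embedding $F^{\sigma,N}_{p,q}\hookrightarrow B^{\sigma,N}_{p,\max\{p,q\}}$: if $q\le p$ then $\ell_q\hookrightarrow\ell_p$ coordinatewise and Minkowski's integral inequality (applied with exponent $p/q\ge1$) gives $\|(g_j)_j\,|\,\ell_p(L_p)\|\le\|(g_j)_j\,|\,L_p(\ell_p)\|$ — wait, more carefully: one uses $\ell_q(L_p)\hookrightarrow L_p(\ell_q)$ when $q\le p$ and $L_p(\ell_q)\hookrightarrow\ell_q(L_p)$ when $q\ge p$, both by Minkowski, together with the trivial monotonicity of $\ell_q$-norms in $q$; combining these with $\max\{p,q\}\ge q$ yields the claim in both cases $q\le p$ and $q\ge p$. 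The left-hand embedding $B^{\sigma,N}_{p,\min\{p,q\}}\hookrightarrow F^{\sigma,N}_{p,q}$ is dual/symmetric: when $\min\{p,q\}=q\le p$ we need $\ell_q(L_p)\hookrightarrow L_p(\ell_q)$, again Minkowski; when $\min\{p,q\}=p\le q$ we need $\ell_p(L_p)=L_p(\ell_p)\hookrightarrow L_p(\ell_q)$, which is just the coordinatewise embedding $\ell_p\hookrightarrow\ell_q$ inside the $L_p$-norm. Applying these with the fixed weight $(\sigma_j)_j$ inserted into each coordinate — which does not interact with any of the Minkowski or monotonicity arguments — gives \eqref{embBFB} with implicit constants independent of $f$.

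I expect the argument to be almost entirely routine; the only point requiring a little care is the restriction $0<p<\infty$, which is needed so that $F$ is defined at all and so that the $L_p(\ell_q)$ and $\ell_q(L_p)$ spaces make sense and Minkowski's inequality can be invoked (for $p<1$ one replaces genuine Minkowski by the elementary $\ell_p$-type inequality $\|\sum a_j\|\le\sum\|a_j\|^p)^{1/p}$-style estimates, or more precisely uses the well-known fact that these nestings persist in the quasi-Banach range — this is exactly the classical Jawerth--Franke-type elementary embedding folklore, already present for $\sigma_j=2^{js}$). So the main ``obstacle,'' such as it is, is merely to phrase the Minkowski steps uniformly for $0<p<\infty$ and $0<q\le\infty$ including the $q=\infty$ modification, and to note explicitly that none of the constants depend on the admissible sequences $\sigma,N$ beyond their admissibility (which is only used to guarantee the spaces are well-defined and the resolution of unity exists). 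I would close by remarking that this is the verbatim generalised-smoothness analogue of the classical elementary embeddings $B^s_{p,\min\{p,q\}}\hookrightarrow F^s_{p,q}\hookrightarrow B^s_{p,\max\{p,q\}}$, recovered on setting $\sigma_j=2^{js}$, $N_j=2^j$.
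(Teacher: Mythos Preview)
Your proposal is correct and matches the paper's approach: the paper does not give an explicit proof but simply remarks that the proposition ``can be proved similarly to the classical case, cf.\ \cite[Prop.~2.3.2/2(iii), p.47]{T-F1},'' which is precisely the Minkowski/monotonicity argument you outline. Your observation that the weight $(\sigma_j)_j$ is inert in these inequalities, so the classical proof carries over verbatim, is exactly the point.
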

The proposition can be proved  similarly to the classical case, cf. \cite[Prop. 2.3.2/2(iii), p.47]{T-F1}. For further embeddings results we refer also to \cite{CaFa04,CH04,CaLe1,HM08}.

In \cite{CaLe1} Caetano and Leopold  used the so-called {\em standardization procedure}, which allows  for a given pair of admissible sequences $\sigma$ and $N$ to define a new admissible sequence $\beta$, such that $\beta$ together with the standard sequence  $(2^j)_{j\in\N_0}$ defines the same space with generalised smoothness.  More precisely, let $N$ and $\sigma$ be admissible sequences and let $N$ satisfy  the condition \eqref{N}. We choose  $\kappa_0\in \N$ such that  
$ \lambda_0^{\kappa_0} \ge 2$~.
Define
\begin{equation} \label{beta}
	\beta_j := \sigma_{k(j)}, \quad \mbox{with }\; k(j):= \min \{ k \in \No : 2^{j-1} \leq N_{k+\kappa_0} \}, \;\; j \in \No .
\end{equation}
Then we have that
\[
\mu_0 \beta_j \leq \beta_{j+1} \leq \mu_1 \beta_j, \quad j \in \No,
\]
with $\mu_0 = \min \{ 1, d_0^{\kappa_0} \}$, $\mu_1 = \max \{ 1, d_1^{\kappa_0} \}$.

Under these conditions it was proved
in \cite[Theorem 1]{CaLe1} the following result.

\begin{theorem} \label{standard}
	Let $N$ and $\sigma$ be admissible sequences and let  $N$ satisfy  \eqref{N}.
	Let further $0 < p,q \leq \infty$ (with $p < \infty$ in the $F$-case). Then
	\[
	F^{\sigma,N}_{p,q}(\Rn) = F^\beta_{p,q}(\Rn) \qquad \text{and} \qquad  B^{\sigma,N}_{p,q}(\Rn) = B^\beta_{p,q}(\Rn),
	\]
	where $\beta := ({\beta}_{j})_{j\in\No}$ is determined by \eqref{beta}.
\end{theorem}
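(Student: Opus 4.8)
The plan is to reduce the equality of spaces to the equality of their quasi-norms, and to achieve the latter by showing that the weighted summation over the general resolution of unity $\varphi^N$ indexed by $N$ can be regrouped, block by block, into a weighted summation over a classical dyadic resolution of unity indexed by $(2^j)_{j\in\No}$, with the smoothness weight $\sigma$ being correspondingly replaced by $\beta$ as defined in \eqref{beta}. The key observation is that the function $k(j)$ in \eqref{beta} is monotone and surjective onto a cofinal subset of $\No$, so it partitions $\No$ into consecutive blocks $I_k := \{ j \in \No : k(j) = k \}$; by the strong monotonicity \eqref{N} of $N$ each block $I_k$ has length bounded above by a constant depending only on $\lambda_0, \lambda_1$ and $\kappa_0$. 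On each such block the sequence $\beta_j$ is constant and equal to $\sigma_k$, while the admissibility of $\sigma$ guarantees that $\sigma_j \sim \sigma_{j'}$ for $j, j'$ in the same bounded block, so the precise choice of representative is immaterial up to equivalence constants.

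The first step is to make the geometry of the two resolutions of unity compatible. Given $\varphi^N = (\varphi_j^N)_{j\in\No} \in \Phi^N$, I would build a dyadic system $(\psi_k)_{k\in\No}$, with $\psi_k$ supported in the annulus $\{ 2^{k-1} \le |\xi| \le 2^{k+1} \}$ (shifted appropriately for small $k$), by summing the members of $\varphi^N$ whose supports fall into that dyadic shell; condition (ii) of Definition~\ref{def-decomp} together with $\lambda_0^{\kappa_0} \ge 2$ ensures that only boundedly many $\varphi_j^N$ contribute to any fixed $\psi_k$ and that the resulting $(\psi_k)$ satisfies all four conditions of a classical resolution of unity. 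Conversely, given a dyadic system, one can group its members to manufacture an admissible system subordinate to $N$. This is essentially the content of \cite[Theorem 1]{CaLe1}, so I would cite it rather than redo it; the role of the present statement is to record the consequence for the function spaces.

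The second step is the quasi-norm comparison. Using the independence of $B^{\sigma,N}_{p,q}(\Rn)$ (respectively $F^{\sigma,N}_{p,q}(\Rn)$) of the chosen system $\varphi^N$ and of the equivalent choice of sequences (stated in the Remark after Definition~\ref{defbf2}), it suffices to estimate one quasi-norm against the other for the specific systems constructed above. For the Besov case this is a regrouping of an $\ell_q$ sum: writing $f_j := \Fti(\varphi_j^N \Ft f)$ and $g_k := \Fti(\psi_k \Ft f)$, one has $g_k = \sum_{j \in J_k} f_j$ over a bounded index set $J_k$ (the indices whose support meets the $k$-th dyadic shell), and the triangle inequality in $L_p$ together with $\sigma_j \sim \beta_{\ell}$ for the relevant $(j,\ell)$ and the bounded overlap gives $\|f \mid B^\beta_{p,q}\| \lesssim \|f \mid B^{\sigma,N}_{p,q}\|$; the reverse estimate is symmetric. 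For the Triebel--Lizorkin case the same regrouping is carried out inside the $\ell_q(L_p)$ mixed quasi-norm, using that a bounded sum of terms is controlled in $\ell_q$ by the corresponding $\ell_q$ of the terms (with a constant depending only on the block length and on $q$), and that the maximal-function / Fourier-multiplier machinery underlying the classical elementary embeddings — the same tool invoked for Proposition~\ref{BF} — survives verbatim.

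The main obstacle is bookkeeping rather than conceptual: one must check that the index function $k(j)$ really does produce consecutive blocks of uniformly bounded length, that no dyadic shell is left uncovered (so that $(\psi_k)$ genuinely sums to a positive constant, i.e.\ condition (iv) holds), and that the small-$j$ boundary terms $j = 0, \dots, \kappa_0 - 1$ are absorbed into a finite initial block without affecting equivalence constants. Since all of this is already established in \cite[Theorem 1]{CaLe1} under precisely the stated hypotheses on $N$ and $\sigma$, the proof here is a citation together with the observation that equality of the (quasi-)norms — not merely of the underlying sets — follows, because the constructions above are two-sided with constants independent of $f$.
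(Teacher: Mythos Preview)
Your proposal is correct and matches the paper's treatment: the paper does not prove Theorem~\ref{standard} at all but simply attributes it to \cite[Theorem~1]{CaLe1}, exactly as you conclude. Your additional sketch of the block-regrouping argument is a faithful outline of how that cited proof proceeds, so there is nothing to correct.
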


{\em Convention}.\quad 
Let $\sigma$ be an admissible sequence. We adopt the nowadays usual custom to write $A^\sigma_{p,q}(\Rn)$ instead of $B^\sigma_{p,q}(\Rn)$ or $F^\sigma_{p,q}(\Rn)$ when both scales of spaces are meant simultaneously in some context.\\

Let us finally mention that generalised spaces on open sets in $\Rn$ can be defined in the standard way by restriction. We recall this definition  for completeness.  Let $\Omega$ be an open set  in $\Rn$,  $\Omega\not= \Rn$. 
If $\sigma$ is an admissible sequence, then 
\begin{equation}\label{A-Omega}
A^\sigma_{p,q}(\Omega) := \{f\in \cD(\Omega): \ \exists\ g\in A^\sigma_{p,q}(\Rn)\quad \text{with}\ g|_\Omega = f\}. 
\end{equation}
Equipped with the quotient norm 
\begin{equation}\label{A-Omega-norm}
\| f| A^\sigma_{p,q}(\Omega)\| := \inf\left\{ \|g|A^\sigma_{p,q}(\rn)\|: \ g\in\A(\rn)\quad \text{with}\ g|_\Omega = f\right\},
\end{equation}
$A^\sigma_{p,q}(\Omega) $ becomes a (quasi-)Banach space. 


\subsection{Alternative definitions} 
Sometimes it is more convenient to use   spaces of generalised smoothness defined by  function parameters instead of admissible sequences. In this section we first give an alternative description of the spaces introduced in Definition~\ref{defbf2}, but now in terms of function parameters. Afterwards we recall the corresponding definitions given  by Merucci and by Edmunds and Netrusov. We shall prove that they are covered by our Definition~\ref{def-decomp}.      

 \subsubsection{ Approach via function parameters} 
We define  a class of suitable functions, corresponding to admissible sequences, and describe the spaces of generalised smoothness in terms of these functions. 

 \begin{definition} \label{weight1} 
Let   $\varphi : [1,\infty) \to (0,\infty) $ be a measurable function. We say that $\varphi$  belongs to a family $\mathcal{V}$ if 
\begin{align*} 
	\underline{\varphi}(t) & := \inf_{s\in [1,\infty)} \frac{\varphi(ts)}{\varphi(s)} \ > 0 \quad ~\text{ for all }~ t \in [1,\infty),  ~\nonumber \\ 
	\nonumber \overline{\varphi}(t) &  := \sup_{s\in [1,\infty)} \frac{\varphi(ts)}{\varphi(s)} \ 
< \infty \quad\text{ for all }~ t \in [1,\infty),
\end{align*}
and both functions  $\underline{\varphi}$ and $\overline{\varphi}$ are measurable.
\end{definition}

\begin{remark}
The above definition was given in \cite{K-L-S-S-3} in the context of weighted Besov spaces. However conditions of the above type  have been used earlier  e.g. in connection with
real interpolation with a function parameter or with generalised smoothness. We refer to \cite{K-L-S-S-3} for details,  examples and further references.
\end{remark}
If $\varphi\in \mathcal{V}$, then  for all $ s \ge 1 $ and $t \ge 1 $ we have obviously 
\begin{equation} \nonumber \underline{\varphi}(t) \, \varphi(s) \le \varphi(ts) 
	\le \overline{\varphi}(t)\varphi(s) \, .
\end{equation}

The functions $ \overline{\varphi}(t) $ and  $1/\underline{\varphi}(t)$  are submultiplicative. This guarantees that 
$$\alpha_{\varphi}= \inf_{t>1} \frac{\log_2\overline{\varphi}(t)}{\log_2 t}   
$$ 
is either a real number or equals $-\infty$ and 
$$\beta_{\varphi}= \sup_{t>1} \frac{\log_2\underline{\varphi}(t)}{\log_2 t}   
$$ 
is either a real number or equal to $+\infty$.  As stated in \cite[Lemma~4.8]{K-L-S-S-3}, for $\varphi\in \mathcal{V}$ it turns out that $-\infty<\beta_{\varphi}\le \alpha_{\varphi} <\infty$ and we have the following counterpart of \eqref{alpha-beta1}: for any $\varepsilon >0$ there is a constant 
$c_{\varepsilon}\ge 1$ such that 
\begin{equation} \label{varphi}
	c^{-1}_\varepsilon \,  s^{\beta_{{\varphi}}-\varepsilon} \le \underline{\varphi}(s) 
	\le 
	\frac{\varphi (s)}{\varphi (1)} \le 
	\overline{\varphi}(s) \le c_\varepsilon \, s^{\alpha_{{\varphi}}+\varepsilon}
	\qquad \mbox{for all}\quad s \ge 1.  
\end{equation}
We call $\beta_{\varphi}$ and $\alpha_{\varphi}$ the {\em upper} 
and  the {\em lower} 
{\em Boyd index  of the  function $\varphi\in \mathcal{V}$, respectively.   }

\begin{remark}
The following relations between functions belonging to the class $\mathcal{V}$ and admissible sequences were noticed in \cite{CaLe1}. 
\begin{itemize}
	\item For each function $\Sigma \in \mathcal{V} $ the sequence $\sigma=(\sigma_k)_{k\in\N_0}$ with $\sigma_k := \Sigma(2^k) $ is admissible, with $d_0 = \underline{\Sigma}(2)$ and $d_1 = \overline{\Sigma}(2)$, cf. \eqref{sigma}.
        Moreover if ${\mathcal N}\in {\mathcal V}$ is strictly increasing 	and there exists $\lambda_0>1$ such that $\lambda_0 \mathcal{N}(t)< \mathcal{N}(2t)$ for any $t\ge 1$, then the sequence $N=(N_k)_{k\in\N_0}$  with $N_k= \mathcal{N}(2^k)$ is admissible and satisfies \eqref{N}.           
	\item Vice versa, for each admissible sequence $\sigma=(\sigma_k)_{k\in\mathbb{N}_0}$ there exists a {corresponding function} $\Sigma \in \mathcal{V}$, that is, a function such that  $\Sigma (t) \sim \sigma_k$ ~for all ~$t\in[2^k,2^{k+1})$. For example, we can define 
	$$ \Sigma(t) = \sigma_k + (2^{-k}t - 1)(\sigma_{k+1}-\sigma_k) \quad\text{ for }\quad t\in[2^k,2^{k+1}).$$
	 If, in addition, an admissible sequence $(N_k)_{k\in\mathbb{N}_0}$ fulfils the condition \eqref{N},    then there exists a corresponding function $\mathcal{N}\in \mathcal{V}$, strictly increasing and with $\lambda_0 \mathcal{N}(t) \le \mathcal{N}(2t) $ for all $ t \ge 1$. The function $\mathcal{N}$ can be defined by the same formulae as above.  
\end{itemize}

 Using the above correspondence one can define spaces $F^{\sigma,N}_{p,q}(\Rn)$ and  $B^{\sigma,N}_{p,q}(\Rn)$ starting from functions $\Sigma, \mathcal{N}\in \mathcal{V}$, $\mathcal{N}$ with properties as described above.  We can just take the admissible sequences $\sigma=(\sigma_k)_{k\in\N_0}$ and $N=(N_k)_{k\in\N_0}$ with
 $\sigma_k = \Sigma(2^k)$ and $N_k=\mathcal{N}(2^k)$ in Definition~\ref{defbf2}.   
So both starting points, the sequential one and the functional one, lead us to the same family of generalised spaces. Moreover, one can even choose functions and sequences in such a way that the corresponding  Boyd indices coincide. In some abuse of notation we may occasionally write $\sigma (2^k)$ instead of $\sigma_k$, for a given admissible sequence $\sigma=(\sigma_k)_{k\in\N_0}$, thus denoting  also by $\sigma$ a function in $\mathcal{V}$ corresponding to the admissible sequence $\sigma$.

  	One can also formulate Theorem~\ref{standard} in terms of functional parameters.  Let $\Sigma$ and $\mathcal{N}$ be  corresponding functions belonging to $\mathcal{V}$ with $\mathcal{N}$ strictly increasing, continuous and such that $\lambda_0\mathcal{N}(t)<\mathcal{N}(2t)$ for any $t\ge 1$.  If $\mathcal{N}^{-1}$  is the inverse function to $\mathcal{N}$, defined on $[\mathcal{N}(1),\infty)$, and 
  	\[ \hat{\beta_j} := \Sigma(\mathcal{N}^{-1}(2^j)), \quad\text{for}\quad j \in \mathbb{N}_0\; \text{ large enough}, \]
  	then $(\hat{\beta_j})_{j}$ is an admissible sequence, equivalent to the sequence $\beta$ defined for the corresponding admissible sequences $\sigma$ and $N$ by formulae \eqref{beta}. 
  	In  consequence $B^{\hat{\beta}}_{p,q}(\Rn) = B^{\beta}_{p,q} (\Rn)=  B^{\sigma,N}_{p,q}(\Rn)$ and  $F^{\hat{\beta}}_{p,q} (\Rn)= F^{\beta}_{p,q} (\Rn)=  F^{\sigma,N}_{p,q}(\Rn)$, respectively. 
  \end{remark}
   

\subsubsection{The approach related to interpolation} 
In connection with real interpolation with a function parameter Merucci introduced Besov spaces of generalised smoothness $B^\phi_{p,q}(\Rn)$, cf. \cite{Me}. Here $\phi:(0,\infty) \to (0,\infty)$ denotes a positive continuous function that satisfies the condition \eqref{B-class} below. These 
spaces  were later used by {Cobos and Fernandez  in \cite{CoFe}}, Cobos and K\"uhn in \cite{CoKu}, and many other authors. 

\begin{definition} A continuous function   
$\phi: (0,\infty) \to (0,\infty)$ with $\phi(1)=1$  belongs to the class $\mathcal{B}$ if 
\begin{equation}\label{B-class}
\overline{\phi}(t) = \sup_{u>0} {\frac{\phi(tu)}{\phi(u)}} < \infty  \quad\text{ for every }~~ t > 0.
\end{equation}
\end{definition}

For $\phi\in\mathcal{B}$ and $0<p,q\le\infty$, the Besov space of generalised smoothness $B^\phi_{p,q}(\Rn)$ consists of
all $f \in \mathcal{S}'(\Rn )$ having a finite quasi-norm
$$ 
\Big\|f\,|\, B^{\phi}_{p,q}(\Rn) \Big\| :=
\left(\sum\limits _{j=0}^{\infty}
\phi(2^{j})^q \, \|{\cal F}^{-1}\, (\varphi_j {\cal F}f)|L_p(\Rn)\|^q\,
\right)^{1/q}, 
$$
with the usual modification when $q=\infty$.

The classes   $\mathcal{B}$ and  $\mathcal{V}$ define the same spaces. First of all, due to Theorem~\ref{standard}, we can restrict our attention to the spaces defined by the classical dyadic resolution of unity. 
For each function $\phi$ belonging to $\mathcal{B}$ we can define  a function  {$\varphi$} belonging to $\mathcal{V}$ by restriction. It results in $\phi(2^j) = \varphi(2^j)$, $j\in\no$, and 
$\overline{\varphi}(t) \le \overline{\phi}(t) < \infty$, $t\geq 1$, is obvious. On the other hand, we have also
$$\underline{\varphi}(t) = \inf_{s\ge 1} {\frac{\varphi(ts)}{\varphi(s)}} = \frac{1}{\sup_{s \ge 1} {\frac{\varphi(s)}{\varphi(ts)}}} = \frac{1}{\sup_{s \ge 1} {\frac{\varphi((ts)t^{-1})}{\varphi(ts)}}}\ge \frac{1}{\overline{\phi}(t^{-1})} > 0 ~~.  $$
Conversely, an easy calculation shows that the extension of a function $\varphi \in \mathcal{V} $, normed with $\varphi(1)=1$, by the constant $1$ for $0 < t < 1$ belongs to $\mathcal{B}$. 
Consequently,  the function spaces $B^\sigma_{p,q}(\Rn)$, $B^\varphi_{p,q}(\Rn)$ and $ B^\phi_{p,q}(\Rn)$ with $\phi\in \mathcal{B}$, $\varphi\in \mathcal{V}$ and $\sigma$ being an admissible sequence, coincide, cf. also   \cite[Section 2.2]{CaFa04} and  \cite[Proposition 3]{Al}. 

\begin{remark}
	In \cite{BrMo03} Bricchi and Moura consider a class of functions $\mathbb{B}$ that is very close to the class $\mathcal{B}$.  They add one more condition. Namely, a function $g$ belongs to $\mathbb{B}$ if it  {belongs to $\mathcal{B}$ and additionally satisfies } $g(t^{-1})= g(t)^{-1}$ for any $t>0$. But this change is immaterial in the sense that both classes of function parameters lead to the same function spaces. {The only point is that the Boyd indices of the admissible sequence and of the corresponding function in $\mathbb{B}$ coincide,  cf. \cite[Proposition 3.6]{BrMo03}. The same may not happen in general  for functions in $\mathcal{B}$, in particular when one considers a  function in $\mathcal{B}$ defined as being constant in the interval $(0,1)$, as remarked in   \cite{LN22}. Note that the Boyd indices of a function in $\mathcal{B}$ are defined  similarly to the ones of a function in $\mathcal{V}$ (taking  now in their definition the infimum and the supremum over the interval $(0,\infty)$ instead of the interval $[1,\infty)$).}
\end{remark}

\subsubsection{Another approach via parameter functions}\label{EdNet-approach}
Edmunds and Netrusov used in \cite{EdNe} a bit different notation. They considered parameter  functions $\omega: (0,1] \to (0,\infty) $ that satisfy the following condition:  
there exist  positive constants $L, c>0$ such that  for all $0<t_1\le t_2\le 1$ it holds
\begin{equation} \label{fomega} \omega(t_1) t_1^{-L} \ge c\,  \omega(t_2) t_2^{-L}  \qquad\text{and}\qquad  c\,\omega(t_1) t_1^{L} \le   \omega(t_2) t_2^{L}~.
\end{equation}

For $0<p,q\le\infty$, the Besov space of generalised smoothness $B^\omega_{p,q}(\Rn)$ consists of
all $f \in \mathcal{S}'(\Rn )$ having a finite quasi-norm
$$ 
\Big\|f\,|\, B^{\omega}_{p,q}(\Rn) \Big\| :=
\left(\sum\limits _{j=0}^{\infty}
 \Big(\frac{1}{\omega(2^{-j})}\Big)^q \, \|{\cal F}^{-1}\, (\varphi_j {\cal F}f)|L_p(\Rn)\|^q\,
\right)^{1/q}, 
$$
with the usual modification when $q=\infty$. 
Again the above function spaces are covered by Definition~\ref{defbf2}.   Let $\varphi \in \mathcal{V}$ and define $\omega(t) := 1/\varphi(t^{-1})$, then $\omega $ fulfils the condition \eqref{fomega} with {$L=\max\{\alpha_{\varphi},- \beta_{\varphi}\}+\varepsilon$} 
and  {$c=c_\varepsilon^{-1}$, cf. } \eqref{varphi}. 
On the other hand, taking  $\sigma_j := 1/\omega(2^{-j})$ we get an admissible sequence with $d_0 = c\, {2^{-L}}$ and $d_1 = c^{-1} 2^L $ which again defines the same function space.

\begin{remark}For the sake of completeness we remark that function spaces of generalised smoothness also appear as trace spaces for function spaces on fractals, so-called $(\alpha,\Psi)$- or $h$-sets. We do not want to explicate the details here, but refer to \cite{Bri-tese,Mo2}.
\end{remark}


\subsection{Wavelet decomposition}\label{wavdecomp}
We briefly recall the wavelet characterisation of Besov spaces with general smoothness  proved in \cite{Al}. It will be essential in our approach. 
We will assume in this section that $(\sigma_j)_{j\in\No}$ is an admissible sequence with $\sigma_0 = 1$ and $\Sigma$ is a  corresponding function belonging to $\mathcal{B}$.  

Let $\widetilde{\phi}$ be a scaling function  on $\R$ with compact support and of sufficiently high regularity.
Let $\widetilde{\psi}$ be an associated wavelet. Then the  tensor-product ansatz yields a scaling function $\phi$  and associated wavelets
$\psi_1, \ldots, \psi_{2^{\nd}-1}$, all defined now on $\Rn$.  We suppose $\widetilde{\phi} \in C^{r}(\R)$ and $\supp \widetilde{\phi}
\subset [-\nu_0,\, \nu_0]$ for certain natural numbers $r$ and $\nu_0$. This implies
\begin{equation}\label{2-1-2}
	\phi, \, \psi_\ell \in C^{r}(\Rn) \quad \text{and} \quad 
	\supp \phi ,\, \supp \psi_\ell \subset [-\nu,\, \nu]^\nd , 
\end{equation}
for $\ell=1, \ldots \, , 2^{\nd}-1$. Moreover 
\[\int_{\R^d} x^\alpha \psi_\ell(x) \dint x = 0 \qquad\text{for}\qquad |\alpha|<r .\]
We use the same abbreviations for the wavelet system as in \cite{Al}, that is,
\[ 
\psi^{\ell}_{j,m}(\cdot) := \begin{cases} 
	\phi(\cdot - m), & j=0,\  m\in \Zn,\  {\ell}=1 \\ 
	\psi^{\ell}_{j,m}(2^{j-1} \cdot - m), & j\in\N,\ m\in\Zn, \ 1\le \ell \le L=2^{\nd}-1.  \end{cases} 
\]
%
Let $L_0=1$ and $L_j=L=2^{\nd}-1$ if $j\in \N$.  Then 
 $$\{2^{j\nd/2} \psi^{\ell}_{jm}: \ j\in\mathbb{N}_0,  \ 1\le \ell \le L_j, \ m\in\Zn\}$$ 
is an orthonormal basis in $L_2(\Rn)$. 

We put  $I=\{(\ell,j,m): j\in \No,\ 1\le \ell \le L_j,\ m\in \Zn\}$ and $I'=\{(\ell,j): j\in \No,\ 1 \le \ell \le L\}$.
Let $0<p,q\le \infty$ an let $\sigma=(\sigma_j)_{j\in \N_0}$ be an admissible sequence. We need first to introduce the appropriate sequence
spaces $b^\sigma_{p,q}$ which consist of all complex-valued sequences 
$\lambda= (\lambda^\ell_{j,m})_{(\ell,j,m)\in I}$ such
that the following quasi-norm
$$\|\lambda|b^\sigma_{p,q}\| := \left( \sum_{(\ell,j) \in I'} (\sigma_j 2^{-j\nd/p})^q \left(\sum_{m\in \Zn} |\lambda^{\ell}_{jm}|^p\right)^{q/p}\right)^{1/q} $$
(with the usual modifications if $p=\infty$ and/or $q=\infty$) is finite. 

Now we can formulate the required wavelet decomposition result.

\begin{theorem}[{\cite[Theorem 13]{Al}}] \label{wavelet-thm}
  Let $0 < p \le \infty$, $0<q \le \infty$, 
  and let $\sigma=(\sigma_j)_{j\in\N_0}$ be an admissible sequence with $\sigma_0=1$. Then there exists a number $r(\sigma,p)$ such that, for any $r\in \N$ with $r> r(\sigma,p)$, the following holds.\\
Any given $f\in \mathcal{S}'(\Rn)$ belongs to $B^\sigma_{p,q}(\Rn)$ if, and only if, it can be represented as 
\[
f   =  \sum_{(\ell,j,m) \in I} \lambda^\ell_{j,m} \psi^\ell_{j,m} \quad\mbox{ with }\quad \lambda=(\lambda^\ell_{j,m}) \in  b^\sigma_{p,q}~,
\]
with unconditional convergence in $\mathcal{S}'(\Rn)$. Moreover, the coefficients $\lambda^\ell_{j,m}$ are uniquely
 determined by
\[ \lambda^\ell_{j,m}= \lambda^\ell_{j,m}(f) := 2^{j\nd}\langle f,\psi^\ell_{j,m}\rangle. 
\] 
Furthermore,
\[ \|f|B^\sigma_{p,q}(\Rn)\| \sim \| \lambda(f)| b^\sigma_{p,q} \|\]
(equivalent quasi-norms), where $\lambda(f) = (\lambda^\ell_{j,m}(f))_{(\ell,j,m)\in I}$.   
\end{theorem}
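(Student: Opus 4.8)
The plan is to obtain Theorem~\ref{wavelet-thm} from two by now classical characterisations of the generalised Besov spaces $B^\sigma_{p,q}(\Rn)$ available in the literature, namely the atomic decomposition theorem and the characterisation by local means, both of which one finds in \cite{FaLe} for the dyadic resolution of unity. By Theorem~\ref{standard} the statement is anyway formulated in the dyadic situation ($N=(2^j)_j$), so no further reduction of the resolution of unity is needed; the normalisation $\sigma_0=1$ is only a convenient choice and is irrelevant up to equivalent sequences.

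\emph{Synthesis: from the sequence space into $B^\sigma_{p,q}(\Rn)$.} First I would verify that, after the dilation/translation normalisation already built into the $\psi^\ell_{j,m}$, the rescaled functions $\sigma_j^{-1}2^{jd/p}\psi^\ell_{j,m}$ are, up to a fixed multiplicative constant, admissible $(\sigma,p)$-atoms: by \eqref{2-1-2} they are supported in cubes of side-length $\sim 2^{-j}$ centred at $2^{-j}m$ and belong to $C^r$ with the required derivative bounds, and for $j\ge 1$ all their moments up to order $r-1$ vanish. Squeezing $\sigma_j$ between $2^{(\beta_\sigma-\varepsilon)j}$ and $2^{(\alpha_\sigma+\varepsilon)j}$ via \eqref{alpha-beta1}, the smoothness and moment requirements of the atomic theorem become two inequalities essentially of the form $r>\alpha_\sigma$ and $r>d\bigl(1/\min\{p,1\}-1\bigr)-\beta_\sigma$; declaring $r(\sigma,p)$ to be (a bit more than) the maximum of these quantities and invoking the atomic decomposition theorem yields, for every $\lambda\in b^\sigma_{p,q}$, that the series $\sum_{(\ell,j,m)\in I}\lambda^\ell_{j,m}\psi^\ell_{j,m}$ converges unconditionally in $\mathcal{S}'(\Rn)$ to some $f\in B^\sigma_{p,q}(\Rn)$ with $\|f|B^\sigma_{p,q}(\Rn)\|\le c\,\|\lambda|b^\sigma_{p,q}\|$.

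\emph{Analysis: the coefficients are local means.} Conversely, for $f\in B^\sigma_{p,q}(\Rn)$ I would read $\lambda^\ell_{j,m}(f)=2^{jd}\langle f,\psi^\ell_{j,m}\rangle$ as a local mean of $f$ at scale $2^{-j}$ with kernel $2^{jd}\psi_\ell(2^{j-1}\,\cdot\,-m)$ for $j\ge1$, the level $j=0$ being handled by the scaling function $\phi(\,\cdot\,-m)$. Because $\psi_\ell$ is compactly supported, lies in $C^r$ and has moments up to order $r-1$ vanishing, this kernel is admissible for the local-means characterisation of $B^\sigma_{p,q}(\Rn)$ precisely in the range $r>r(\sigma,p)$, and that theorem gives $\|\lambda(f)|b^\sigma_{p,q}\|\le c\,\|f|B^\sigma_{p,q}(\Rn)\|$; in particular $\lambda(f)\in b^\sigma_{p,q}$. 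Applying the synthesis step to $\lambda(f)$ and subtracting shows that $f$ indeed equals its wavelet series, and combining the two estimates gives the asserted equivalence $\|f|B^\sigma_{p,q}(\Rn)\|\sim\|\lambda(f)|b^\sigma_{p,q}\|$.

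\emph{Uniqueness, and the main obstacle.} Uniqueness is the soft part: since $\{2^{jd/2}\psi^\ell_{j,m}\}$ is orthonormal in $L_2(\Rn)$ and, in view of $B^\sigma_{p,q}(\Rn)\hookrightarrow\mathcal{S}'(\Rn)$ together with the smoothness and compact support of the wavelets, the dual pairing $\langle f,\psi^\ell_{j,m}\rangle$ is well defined and continuous, testing any representation $f=\sum\mu^\ell_{j,m}\psi^\ell_{j,m}$ converging in $\mathcal{S}'(\Rn)$ against a single $\psi^{\ell'}_{j',m'}$ forces $\mu^{\ell'}_{j',m'}=2^{j'd}\langle f,\psi^{\ell'}_{j',m'}\rangle$. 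Unconditional convergence in $\mathcal{S}'(\Rn)$ is already delivered by the synthesis step, and when $p,q<\infty$ the density of finitely supported sequences in $b^\sigma_{p,q}$ upgrades it to convergence in $\|\,\cdot\,|B^\sigma_{p,q}(\Rn)\|$. I expect the real work to sit in the two middle steps: one must check that the normalisation of the wavelets matches the weights $\sigma_j2^{-jd/p}$ appearing in $b^\sigma_{p,q}$ exactly, and — more delicately — that the smoothness and moment thresholds of the atomic and local-means theorems, stated there in terms of a classical smoothness parameter $s$, really collapse to the single number $r(\sigma,p)$ once $\sigma$ is controlled through its Boyd indices via \eqref{alpha-beta1}; this is precisely where the hypothesis $r>r(\sigma,p)$ is consumed.
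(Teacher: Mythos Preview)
The paper does not give its own proof of this theorem: it is quoted verbatim as \cite[Theorem~13]{Al}, and the only argument supplied is the remark that the case $p=\infty$, not treated in \cite{Al}, is covered by \cite[p.~628 and Theorem~4.1]{EdNe}. So there is nothing in the paper to compare your proposal against beyond the citation.

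That said, your sketch is essentially the standard route to such wavelet characterisations and is, in outline, how the result in \cite{Al} is obtained: synthesis via the atomic decomposition (the rescaled wavelets are atoms with the right smoothness and moment conditions once $r$ is large enough relative to the Boyd indices of $\sigma$), analysis via the local-means characterisation (the wavelet coefficients are local means with admissible kernels), and uniqueness from orthogonality and the $\mathcal{S}'$-duality. The thresholds you identify, roughly $r>\alpha_\sigma$ and $r>d(1/\min\{p,1\}-1)-\beta_\sigma$, are the correct ones, and the use of \eqref{alpha-beta1} to convert the $\sigma$-conditions into numerical inequalities on $r$ is exactly the mechanism behind the existence of a single $r(\sigma,p)$. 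Your proposal is sound; it simply supplies what the paper outsources to \cite{Al} and \cite{EdNe}.
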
 

\begin{corollary} {Under the same conditions of Theorem~\ref{wavelet-thm},} the mapping
	\[\mathcal{I}:f\mapsto \lambda(f)\]
establishes a topological isomorphism from $B^\sigma_{p,q}(\Rn)$ onto $ b^\sigma_{p,q}$. 
\end{corollary}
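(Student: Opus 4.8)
The plan is to derive the corollary directly from Theorem~\ref{wavelet-thm}, which already contains all the required analytic content; the remaining work is purely to repackage the equivalence and the norm estimate as a statement about a single linear map. First I would fix $0<p\le\infty$, $0<q\le\infty$ and an admissible sequence $\sigma$ with $\sigma_0=1$, together with $r\in\N$ with $r>r(\sigma,p)$ as in the theorem, so that the wavelet system $\{\psi^\ell_{j,m}\}$ and the coefficient functionals $\lambda^\ell_{j,m}(f)=2^{j\nd}\langle f,\psi^\ell_{j,m}\rangle$ are available. The map in question is $\mathcal{I}\colon f\mapsto \lambda(f)=(\lambda^\ell_{j,m}(f))_{(\ell,j,m)\in I}$.

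The verification then proceeds in four short steps. \emph{Linearity}: each $\lambda^\ell_{j,m}$ is linear in $f$ since it is given by a dual pairing with a fixed test function $\psi^\ell_{j,m}\in\mathcal{S}(\Rn)$, so $\mathcal{I}$ is linear. \emph{Well-definedness and boundedness}: Theorem~\ref{wavelet-thm} asserts that $f\in B^\sigma_{p,q}(\Rn)$ implies $\lambda(f)\in b^\sigma_{p,q}$ with $\|f|B^\sigma_{p,q}(\Rn)\|\sim\|\lambda(f)|b^\sigma_{p,q}\|$; the constants in the equivalence are independent of $f$, hence $\mathcal{I}\colon B^\sigma_{p,q}(\Rn)\to b^\sigma_{p,q}$ is bounded and in fact norm-equivalent. \emph{Injectivity}: if $\lambda(f)=0$ then, by the uniqueness part of the theorem, $f=\sum_{(\ell,j,m)\in I}\lambda^\ell_{j,m}(f)\psi^\ell_{j,m}=0$; equivalently, injectivity is immediate from the lower bound in the norm equivalence. \emph{Surjectivity}: given any $\lambda=(\lambda^\ell_{j,m})\in b^\sigma_{p,q}$, the theorem guarantees that $f:=\sum_{(\ell,j,m)\in I}\lambda^\ell_{j,m}\psi^\ell_{j,m}$ converges unconditionally in $\mathcal{S}'(\Rn)$ and lies in $B^\sigma_{p,q}(\Rn)$, and that its wavelet coefficients are uniquely determined and equal the prescribed $\lambda^\ell_{j,m}$; thus $\mathcal{I}f=\lambda$. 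Combining the four steps, $\mathcal{I}$ is a linear bijection with $\|\mathcal{I}f|b^\sigma_{p,q}\|\sim\|f|B^\sigma_{p,q}(\Rn)\|$, so both $\mathcal{I}$ and $\mathcal{I}^{-1}$ are bounded, i.e.\ $\mathcal{I}$ is a topological isomorphism.

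There is essentially no obstacle here: the corollary is a formal consequence of Theorem~\ref{wavelet-thm}, and the only point deserving a word of care is the identification of $\mathcal{I}^{-1}$ with the synthesis map $\lambda\mapsto\sum_{(\ell,j,m)\in I}\lambda^\ell_{j,m}\psi^\ell_{j,m}$ — this is exactly the ``moreover'' clause on uniqueness of coefficients, which ensures the synthesis map is a genuine two-sided inverse rather than merely a right inverse. Accordingly I would keep the proof to a couple of sentences, citing Theorem~\ref{wavelet-thm} for the norm equivalence, its uniqueness clause for injectivity, and its representability clause for surjectivity, and then invoking the open mapping principle (or, more simply, the explicit two-sided norm bound) to conclude that the inverse is also continuous.
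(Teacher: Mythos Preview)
Your proposal is correct and matches the paper's approach: the paper states the corollary without proof, treating it as an immediate consequence of Theorem~\ref{wavelet-thm}, and your four-step verification (linearity, boundedness via the norm equivalence, injectivity via uniqueness of coefficients, surjectivity via the representation clause) is exactly the natural unpacking of that implication. One tiny slip: the wavelets $\psi^\ell_{j,m}$ are compactly supported $C^r$ functions, not elements of $\mathcal{S}(\Rn)$, so the pairing $\langle f,\psi^\ell_{j,m}\rangle$ is to be understood in the extended sense of the cited wavelet theory rather than as a Schwartz-space duality --- but this does not affect your argument.
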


\begin{remark}
Theorem 13 of \cite{Al} is stated for $p$ finite, however the result is also valid for $p=\infty$, as observed in \cite[p.~628]{EdNe}, see also  \cite[Theorem 4.1]{EdNe}.
\end{remark}

In next sections we shall mainly work with embeddings of spaces defined on a bounded Lipschitz domain $\Omega\subset\Rn$. So we need a topological isomorphism between the function space and the corresponding sequence space not only for  $B^\sigma_{p,q}(\Rn)$, but also for  $B^\sigma_{p,q}(\Omega)$.  Cobos, Domínguez and  K\"{u}hn proved in \cite{CoDoKu}  that such an isomorphism exists. We repeat their argument based on universal extension operators for Besov spaces defined on  Lipschitz domains and the real interpolation with function parameter.

Let $0 < p, q \le \infty $ and let $\sigma=(\sigma_j)_{j\in\N_0}$ be an admissible sequence with $\sigma_0=1$. Recall that $\Sigma$ is assumed to be a function in $\mathcal{B}$ corresponding to the sequence $\sigma$, that is with $\Sigma(2^j)=\sigma_j$. Moreover, it can be assumed that $\sigma$ and $\Sigma$ have the same Boyd indices. We follow the same notation as in  \cite{CoFe} and  consider  $s_1, s_0\in \R$ so that 
\[ s_1 < \beta_{\sigma}  \le \alpha_{\sigma} < s_0.\]
 Then 
 \begin{equation}\label{int-gamma} \left(B^{s_0}_{p,q}(\Rn),B^{s_1}_{p,q}(\Rn)\right)_{\gamma,q} =  B^{\sigma}_{p,q}(\Rn) 
 \end{equation}
 with 
 \[ \gamma(t) := \frac{t^{\frac{s_0}{s_0-s_1}}}{\Sigma(t^{\frac{1}{s_0 - s_1}})},  \]
 cf. \cite[Theorem 5.3, Remark 5.4]{CoFe}.

The restriction operator $\re(g) = g|\Omega $ is always bounded  and linear, both from   $B^{\sigma}_{p,q}(\Rn)$ onto $ B^{\sigma}_{p,q}(\Omega)$ and  from   $B^{s_k}_{p,q}(\Rn)$ onto $ B^{s_k}_{p,q}(\Omega)$, $k=0,1$. But the interpolation formula \eqref{int-gamma} implies 
\[ \re:  B^{\sigma}_{p,q}(\Rn) \longrightarrow    \left(B^{s_0}_{p,q}(\Omega),B^{s_1}_{p,q}(\Omega)\right)_{\gamma,q} ,  \]
so  $B^{\sigma}_{p,q}(\Omega) \hookrightarrow \left(B^{s_0}_{p,q}(\Omega),B^{s_1}_{p,q}(\Omega)\right)_{\gamma,q}$. 
 
If $\Omega\subset\Rn$ is a Lipschitz domain, there exists a universal linear and bounded extension operator $\ext $ for all classical Besov spaces,   see Rychkov \cite{Ryc} or \cite[pp.64-66]{T-F3}. In particular,
\[ \ext: B^{s_k}_{p,q}(\Omega) \longrightarrow  B^{s_k}_{p,q}(\Rn),\quad k=0,1, \]
is bounded. 
Using again the interpolation argument  we get 
\[ \ext:  \left(B^{s_0}_{p,q}(\Omega),B^{s_1}_{p,q}(\Omega)\right)_{\gamma,q} \longrightarrow B^{\sigma}_{p,q}(\Rn) . \]
For each $f \in  \left(B^{s_0}_{p,q}(\Omega), B^{s_1}_{p,q}(\Omega)\right)_{\gamma,q} $ it follows that $\ext(f) \in  B^{\sigma}_{p,q}(\Rn)$ and so $ f = \re(\ext(f)) \in  B^{\sigma}_{p,q}(\Omega)$. Moreover,
\[ \|f| B^{\sigma}_{p,q}(\Omega)\| \le \| \ext(f)| B^{\sigma}_{p,q}(\Rn)\| \le c \, \| f |   \left(B^{s_0}_{p,q}(\Omega),B^{s_1}_{p,q}(\Omega)\right)_{\gamma,q} \| .\]
This yields that $  B^{\sigma}_{p,q}(\Omega) = \left(B^{s_0}_{p,q}(\Omega),B^{s_1}_{p,q}(\Omega)\right)_{\gamma,q}$ with equivalent norms. It also shows that  the linear operator $\ext$ maps  $  B^{\sigma}_{p,q}(\Omega)$ into   $  B^{\sigma}_{p,q}(\Rn)$. 
Now let 
\[I_j = \{(\ell,m): 1 \le \ell \le L_j , m \in \Zn \mbox{~and~} \supp{\psi^{\ell}_{jm}} \cap \Omega \not= \emptyset \} ~~.\]
Since $\Omega$ is bounded with non-empty interior and the wavelets $\psi^{\ell}_{jm}$ are compactly supported we get 
$ M_j = \# I_j \sim 2^{j\nd}$. Using the extension operator $\ext$ it follows from the wavelet characterisation that 
\begin{equation}\label{wavebound}
	   B^{\sigma}_{p,q}(\Omega)\quad \text{is isomorphic to }\quad   \ell_q(\sigma_j 2^{-j\frac{\nd}{p}} \ell_p^{M_j})\quad \text{ with } \quad M_j \sim 2^{j\nd}, \ 0<p \le \infty, \ 0<q\le \infty,
\end{equation}
and $\sigma=(\sigma_j)_{j\in \N_0}$ being an admissible sequence. 
Here $ \ell_q(\beta_j \ell_p^{M_j})$, $0 < p,q \le \infty$, denotes the space of all complex-valued sequences $\lambda = (\lambda_{j,\ell})_{j\in\No,\ell=1,...M_j}$ such that 
$$\|\lambda | \ell_q(\beta_j \ell_p^{M_j})\| := \left(\sum_{j=0}^\infty \beta_j^q\left(\sum_{\ell=1}^{M_j} |\lambda_{j,\ell}|^p \right)^{q/p}\right)^{1/q} < \infty$$
(with the usual modifications if $p=\infty$ and/or $q=\infty$) is finite, where $(\beta_j)_{j\in\No}$ is an admissible sequence with $\beta_j > 0$ and $(M_j)_{j\in\No}$ is a sequence of natural numbers.

\section{Compact embeddings  and their entropy numbers}\label{sec-compact}
In this section we want to prove the necessary and sufficient conditions for the compactness of  embeddings of spaces with generalised smoothness on bounded Lipschitz domains. In addition, we want to estimate the asymptotic behaviour of the entropy numbers of those compact embeddings.  Here we also collect the various partial results already known for a longer time. But first  we briefly recall the concept of entropy numbers.

\begin{definition}\label{defi-ak}
	Let $ X $ and $Y$ be two complex (quasi-) Banach spaces, 
	$k\in\nat\ $ and let $\ T\in\mathcal{L}(X,Y)$ be a linear and 
	continuous operator from $ X $ into $Y$. 
	The {\em k\,th entropy number} $\ e_k(T)\ $ of $\ T\ $ is the
	infimum of all numbers $\ \varepsilon>0\ $ such that there exist $\ 2^{k-1}\ $ balls
	in $\ Y\ $ of radius $\ \varepsilon\ $ which cover the image $\ T(B_X)$ of the closed unit ball $\ B_X:=\{x\in
	X:\;\|x|X\|\leq 1\}$.
\end{definition}

\begin{remark}     	
	For details and properties of entropy numbers we refer to \cite{CS,EE,Koe,Pie-snumb} (restricted to the case of Banach spaces), and \cite{ET} for some extensions to quasi-Banach spaces. 
		Note that  the sequence $(e_k(T))_{k\in \N}$ is non-increasing and $e_1(T)\leq \|T\|$. One can easily show that 	$\ T\ $ {is compact} {if, and only if,} $\ \lim_{k\rightarrow\infty} e_k(T)= 0\ $. So the asymptotic behaviour of entropy numbers i.e., their decay,  measure "how compact" the operator $T$ is. Moreover, the asymptotic behaviour has an application to estimate eigenvalues of compact operators, cf.  \cite{CS,EE,ET,Koe,Pie-snumb} for further details.        
	     Among other properties of entropy numbers we only want to mention 
	the multiplicativity: let $X,Y,Z$
	be complex (quasi-) Banach spaces and $\ T_1 \in\mathcal{L}(X,Y)$, $ T_2 \in\mathcal{L}(Y,Z)$. Then
	\beq
	e_{k_1+k_2-1} (T_2\circ T_1) \leq e_{k_1}(T_1)\,
	e_{k_2} (T_2),\quad k_1, k_2\in\nat.
	\label{e-multi}
	\eeq
	\end{remark}


In the classical setting of Besov and Triebel-Lizorkin spaces defined on  bounded Lipschitz domains the compactness of Sobolev embeddings as well as their entropy numbers  are well known.

\begin{proposition}\label{prop-spaces-dom}
	Let $\Omega\subset\rn$ be an arbitrary bounded domain, 
	$s_i\in\real$, and  $0<p_i,q_i\leq\infty$ ($p_i<\infty$ if $A$=$F$), $i=1,2$. Then 
	\begin{equation}\label{id_Omega}
		\id_\Omega : \Ae(\Omega) \to \Az(\Omega)
	\end{equation}
	is compact, if, and only if,
	\begin{equation}\label{id_Omega-comp}
		s_1-s_2 > \nd\left(\frac{1}{p_1}-\frac{1}{p_2}\right)_+\ .
	\end{equation}
Moreover
\begin{equation}\label{id_Omega-comp-ek}
	e_k(\id_\Omega) \sim k^{-\frac{s_1-s_2}{\nd}},\quad k\in \N .
\end{equation}
\end{proposition}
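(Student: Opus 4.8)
The plan is to reduce the embedding $\id_\Omega$ between function spaces on $\Omega$ to a diagonal-type embedding between the associated wavelet sequence spaces, and then invoke the corresponding sequence space estimates. Concretely, by the wavelet isomorphism \eqref{wavebound} (and its obvious analogue for Triebel--Lizorkin spaces via Proposition~\ref{BF}, which sandwiches $F$ between two $B$ spaces with the same $p$ and only a change in the fine index $q$), the embedding $\id_\Omega:\Ae(\Omega)\to\Az(\Omega)$ is equivalent, up to isomorphism, to the identity
\[
\id: \ell_{q_1}\!\left(2^{js_1}2^{-j\nd/p_1}\ell_{p_1}^{M_j}\right)\longrightarrow \ell_{q_2}\!\left(2^{js_2}2^{-j\nd/p_2}\ell_{p_2}^{M_j}\right), \qquad M_j\sim 2^{j\nd}.
\]
So the whole statement — both the compactness criterion \eqref{id_Omega-comp} and the entropy estimate \eqref{id_Omega-comp-ek} — follows from the known theory of embeddings of such vector-valued sequence spaces. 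First I would recall/cite the standard results (Triebel's book \cite{ET}, or the relevant works of Edmunds--Triebel, Leopold, K\"uhn and others) on identity maps between finite-dimensional blocks $\ell_{p_1}^{M}\to\ell_{p_2}^{M}$, both their norm and their entropy numbers, and then on the weighted $\ell_q$-sums of such blocks.

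For the compactness equivalence itself one does not even need the sharp sequence-space machinery: necessity of $s_1-s_2>\nd(1/p_1-1/p_2)_+$ can be seen by testing on wavelets concentrated at a single scale $j$ (giving the constraint coming from the $\ell_{p_1}^{M_j}\to\ell_{p_2}^{M_j}$ norm $\sim M_j^{(1/p_2-1/p_1)_+}\sim 2^{j\nd(1/p_2-1/p_1)_+}$) together with the requirement that the scalar factors $2^{j(s_2-s_1)}2^{-j\nd(1/p_2-1/p_1)}2^{j\nd(1/p_2-1/p_1)_+}$ tend to zero; sufficiency follows by approximating $\id_\Omega$ by the finite-rank truncations to scales $j\le J$ and estimating the tail. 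This reproduces \eqref{id_Omega-comp}. The precise constant $\nd(\tfrac1{p_1}-\tfrac1{p_2})_+$ and the strictness of the inequality are classical and I would just reference them.

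The entropy estimate \eqref{id_Omega-comp-ek} is the substantive part, and the main obstacle is to get both the upper and lower bounds with the correct exponent $-\tfrac{s_1-s_2}{\nd}$ uniformly over all admitted $p_i,q_i$. For the upper bound the standard route is to split $\id_\Omega$ into a sum of building blocks acting at dyadic scales, use the multiplicativity \eqref{e-multi} of entropy numbers together with the sharp entropy estimates for the finite-rank diagonal operators $\mathrm{id}:\ell_{p_1}^{M}\to\ell_{p_2}^{M}$ (the Sch\"utt / Edmunds--Triebel type bounds), and sum the geometric series; the condition \eqref{id_Omega-comp} guarantees convergence and produces the rate $k^{-(s_1-s_2)/\nd}$. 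For the lower bound one restricts to a single well-chosen scale $j$ with $2^{j\nd}\sim k$ and uses the known lower bound for the entropy numbers of $\mathrm{id}:\ell_{p_1}^{M}\to\ell_{p_2}^{M}$, composed with lifting maps back to the function space level via the wavelet isomorphism. The delicate points are the borderline indices and making the block decomposition carry the weights correctly, but all of this is standard and well documented in the literature on entropy numbers of Sobolev embeddings, so in the paper I would present this proposition essentially as a recollection with precise references (e.g. \cite{ET}), rather than redo the computation.
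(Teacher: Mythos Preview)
Your proposal and the paper agree in spirit: the paper does not give a proof of this proposition at all, but simply records it as known, citing \cite[Section~3.3]{ET} for smooth domains and \cite[Proposition~4.33]{T08} for general bounded domains, and remarking that nowadays one reduces to wavelet sequence spaces as in \cite[Theorem~1.20]{T08}. Your final sentence, that you would present the proposition ``essentially as a recollection with precise references'', is exactly what the paper does.

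One point to flag in your sketch: you invoke the isomorphism \eqref{wavebound} to transfer the problem to sequence spaces, but in the paper \eqref{wavebound} is established only for bounded \emph{Lipschitz} domains (it relies on Rychkov's universal extension operator), whereas the proposition is stated for an \emph{arbitrary} bounded domain. So the direct reduction you write down would not cover the full generality claimed. The paper sidesteps this by citing \cite{T08} for the general case; alternatively, the argument in the proof of the paper's own Theorem~\ref{thm-comp} (Step~1 there) shows how to handle arbitrary bounded domains by working with a large cube containing a neighbourhood of $\Omega$ for the upper bound and a small dyadic cube inside $\Omega$ for the lower bound, rather than appealing to a global extension operator.
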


For bounded $C^\infty$ domains this goes back to \cite[Section~3.3]{ET} (and the references given there), for the result on general bounded domains we refer to \cite[Proposition 4.33]{T08}. Nowadays everything can be reduced to wavelet representations and mapping properties of related sequence spaces, cf. \cite[Theorem~1.20]{T08}. In this way also the necessity of \eqref{id_Omega-comp} for the compactness of $\id_\Omega$ can be shown. This is more or less obvious but not explicitly mentioned in the quoted literature.\\

Now we formulate the analogous result for spaces of generalised smoothness. To prove this we use the wavelet decomposition described in Section~\ref{wavdecomp}, cf. \eqref{wavebound}. So first we recall  known theorems concerning the compactness of embeddings of corresponding sequence spaces.  

The following abbreviations will be useful in describing compactness conditions 
\begin{equation} \frac{1}{p^\ast} = \Big(\frac{1}{p_2} - \frac{1}{p_1}\Big)_+  \quad \text{and}\quad \frac{1}{q^*} = \Big(\frac{1}{q_2} - \frac{1}{q_1}\Big)_+ ~~,
\end{equation}
where $0<p_i,q_i\leq\infty$, $i=1,2$. Moreover, recall that $c_0$ denotes the subspace of $\ell_\infty$ containing the null sequences.

\begin{proposition}[{\cite[Theorem 2]{Leo-00}}]  \label{compact-seq}
  Let $0<p_1,p_2\leq\infty$, $0<q_1,q_2\leq\infty$, $(\beta_j)_{j\in\no}$ be an arbitrary weight sequence and $(M_j)_{j\in\no}$ be a sequence of natural numbers. Then the embedding
  \begin{equation}\label{id_beta}
\id_\beta : \ell_{q_1}\left(\beta_j \ell_{p_1}^{M_j}\right) \rightarrow 
\ell_{q_2}\left(\ell_{p_2}^{M_j}\right)
  \end{equation}
  is compact if, and only if,
\begin{align}\label{cond-comp-id_beta}
 	 \ds  \Big(\beta_j^{-1} M_j^{\frac{1}{p^\ast}}\Big)_{j\in\no} \in \ell_{q^\ast}, 
 	\end{align}
where for $q^\ast=\infty$ the space $\ell_\infty$ has to be replaced by $c_0$. 
\end{proposition}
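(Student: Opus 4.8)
The statement is a characterisation of compactness of the diagonal-type embedding $\id_\beta\colon \ell_{q_1}(\beta_j\ell_{p_1}^{M_j})\to\ell_{q_2}(\ell_{p_2}^{M_j})$; since it is quoted from \cite{Leo-00}, the intent is presumably to recall the argument or give a streamlined proof. The plan is to reduce everything to two ingredients: (a) the behaviour of the finite-dimensional identities $\id\colon\ell_{p_1}^{M}\to\ell_{p_2}^{M}$, whose norm is $1$ if $p_1\le p_2$ and $M^{1/p_2-1/p_1}$ if $p_1\ge p_2$, i.e. $\|\id\colon\ell_{p_1}^M\to\ell_{p_2}^M\|\sim M^{1/p^\ast}$ in both cases; and (b) the standard fact that an embedding between direct sums $\bigoplus_j X_j\to\bigoplus_j Y_j$ (with $\ell_{q_1}$- resp. $\ell_{q_2}$-norm on the blocks) is compact if and only if the blockwise norms, suitably renormalised by the weight $\beta_j$, form a sequence in $\ell_{q^\ast}$ (resp. $c_0$ when $q^\ast=\infty$).

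\textbf{Sufficiency.} First I would assume $(\beta_j^{-1}M_j^{1/p^\ast})_j\in\ell_{q^\ast}$ and split $\id_\beta=\sum_{j}$ into the truncations $\id_\beta^{(J)}$ acting only on the first $J$ blocks plus the tail. Each $\id_\beta^{(J)}$ has finite rank, hence is compact, so it suffices to show $\|\id_\beta-\id_\beta^{(J)}\|\to 0$. The operator norm of the tail is controlled by a weighted $\ell_{q^\ast}$-type quantity of the tail sequence $(\beta_j^{-1}M_j^{1/p^\ast})_{j>J}$: this is a routine application of Hölder's inequality, treating the three cases $q_2\le q_1$, $q_1\le q_2<\infty$, $q_2=\infty$ separately, using $\|\id\colon\ell_{p_1}^{M_j}\to\ell_{p_2}^{M_j}\|\le c\,M_j^{1/p^\ast}$ blockwise and then summing the block contributions against the weights $\beta_j^{-1}$. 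Convergence of the tail to $0$ follows from membership in $\ell_{q^\ast}$ (or, when $q^\ast=\infty$, from the null-sequence condition, which is exactly what forces the tail $\sup$ to vanish). Hence $\id_\beta$ is a norm limit of finite-rank operators, so compact.

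\textbf{Necessity.} For the converse I would argue by contraposition: if $(\beta_j^{-1}M_j^{1/p^\ast})_j\notin\ell_{q^\ast}$ (or is not a null sequence when $q^\ast=\infty$), I exhibit a bounded sequence in $\ell_{q_1}(\beta_j\ell_{p_1}^{M_j})$ whose image has no convergent subsequence. The natural candidates are normalised "bump" vectors supported in a single block $j$: take $x^{(j)}$ supported in the $j$-th block, equal to the extremiser of $\|\id\colon\ell_{p_1}^{M_j}\to\ell_{p_2}^{M_j}\|$ scaled so that $\|x^{(j)}\,|\,\ell_{q_1}(\beta_j\ell_{p_1}^{M_j})\|\sim 1$, i.e. with $\ell_{p_1}^{M_j}$-norm $\sim\beta_j^{-1}$; then $\|\id_\beta x^{(j)}\,|\,\ell_{q_2}(\ell_{p_2}^{M_j})\|\sim\beta_j^{-1}M_j^{1/p^\ast}$. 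If $q^\ast=\infty$, if this latter quantity does not tend to $0$ then the (mutually "orthogonal", disjointly supported) images $\id_\beta x^{(j)}$ stay bounded away from $0$ and pairwise far apart, contradicting compactness. If $q^\ast<\infty$, a single block does not suffice; instead I would take finite linear combinations $\sum_{j\in A_n} c_j x^{(j)}$ over growing index sets $A_n$, choosing the coefficients $c_j$ so that the domain norm stays $\lesssim 1$ while the image norm $\bigl(\sum_{j\in A_n}(c_j\beta_j^{-1}M_j^{1/p^\ast})^{q_2}\bigr)^{1/q_2}$ blows up; divergence of $\sum_j(\beta_j^{-1}M_j^{1/p^\ast})^{q^\ast}$ is precisely what makes such a choice possible (this is the standard duality computation behind the failure of $\ell_{q_1}\hookrightarrow\ell_{q_2}$-type estimates for diagonal operators). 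An unbounded image contradicts even boundedness of $\id_\beta$, which is stronger than needed but does the job; alternatively one arranges the combinations to have bounded image but to form a non-precompact set.

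\textbf{Main obstacle.} The conceptually delicate point is the necessity direction when $1/q^\ast=1/q_2-1/q_1>0$ with $q_1<q_2$: here one must simultaneously exploit the finite-dimensionality of the blocks (which is what makes the $\ell_{p_1}^{M_j}\to\ell_{p_2}^{M_j}$ norms finite at all) and the $\ell_{q_1}\to\ell_{q_2}$ mismatch, so the counterexample is genuinely a sequence of multi-block vectors rather than single bumps, and one has to track how the weights $\beta_j$ interact with both the inner ($p$) and outer ($q$) summations. Everything else — the sufficiency estimate and the $q^\ast=\infty$ part of necessity — is a direct case analysis using Hölder's inequality and the elementary finite-dimensional identity-operator norms, which I would not spell out in detail.
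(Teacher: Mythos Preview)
The paper does not prove this proposition at all: it is simply quoted from \cite[Theorem~2]{Leo-00} and used as a black box in the proof of Theorem~\ref{thm-comp}. So there is no in-paper argument to compare your sketch against.

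Your sketch is essentially correct as a reconstruction of the standard proof. One small slip: in your ``main obstacle'' paragraph you write ``$1/q^\ast=1/q_2-1/q_1>0$ with $q_1<q_2$'', but $1/q_2-1/q_1>0$ forces $q_2<q_1$, not $q_1<q_2$. More substantively, this case is not in fact the delicate one. When $q_1>q_2$ (so $q^\ast<\infty$), the operator norm of $\id_\beta$ equals $\|(\beta_j^{-1}M_j^{1/p^\ast})_j\mid\ell_{q^\ast}\|$ exactly (by the diagonal-operator computation you allude to, combined with the fact that $\|\id:\ell_{p_1}^{M_j}\to\ell_{p_2}^{M_j}\|=M_j^{1/p^\ast}$ is attained), so failure of the $\ell_{q^\ast}$-condition already kills boundedness, and non-compactness follows trivially --- you even say this yourself. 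The only case requiring a genuine non-precompactness argument is $q^\ast=\infty$ with the sequence in $\ell_\infty\setminus c_0$, and there your single-block bumps with disjoint supports do the job immediately.
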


We recall that  
a weight sequence $\sigma=(\sigma_j)_{j\in\N_0}$ is called almost strongly increasing if there is a constant $\kappa_{0,\sigma}\in\N$ such that $ 2\sigma_j \le \sigma_k$ for every $j$ and $k$ with $ k\ge j + \kappa_{0,\sigma}$, cf. \cite{Leo-00}. Equivalently, one can say that a sequence  is almost strongly increasing if the lower Boyd index $\beta_\sigma$ of the sequence $\sigma$ is larger than zero,   $\beta_\sigma > 0$, cf. \eqref{Boyd}.

    \begin{theorem}[{\cite[Theorems 3, 4]{Leo-00}}]    \label{entropysequences}
Let $ 0 < p_1, p_2 \le \infty$, $0 < q_1 , q_2 \le \infty$, $(M_j)_{j\in\N_0}$ be an admissible almost strongly increasing sequence
of natural numbers,  $(\beta_j)_{j\in\N_0}$ be 
an admissible sequence and $\big(\beta_j M_j^{-\frac{1}{p^*}}\big)_{j\in\N_0}$ be  almost strongly increasing.
Then 
\begin{equation}
e_{2M_L} (\id_{\beta} : \llqe \rightarrow \llqz)    \sim \beta_L^{-1} M_L^{-(\frac{1}{p_1} - \frac{1}{p_2})}~~.
\end{equation}
\end{theorem}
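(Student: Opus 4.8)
\textbf{Proof proposal for Theorem~\ref{entropysequences}.}

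The plan is to reduce the estimate for the embedding $\id_\beta:\llqe\to\llqz$ to the classical estimates for finite-dimensional embeddings $\id:\ell_{p_1}^{M}\to\ell_{p_2}^{M}$, whose entropy numbers are well documented (Sch\"utt's theorem, see \cite{ET,Sch}), namely $e_k(\id:\ell_{p_1}^M\to\ell_{p_2}^M)\sim 1$ for $1\le k\le \log_2 M$ and $e_k\sim\bigl(\tfrac{\log_2(M/k+1)}{k}\bigr)^{1/p^*}$ for $\log_2 M\le k\le M$ (with the usual reading when $p_1\ge p_2$). First I would set up the block structure: the source and target spaces split along $j\in\No$ into the finite-dimensional blocks $\ell_{p_1}^{M_j}$, $\ell_{p_2}^{M_j}$, weighted by $\beta_j$ on the source side. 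The almost strongly increasing hypotheses on $(M_j)$ and on $(\beta_j M_j^{-1/p^*})$ say, via the Boyd-index reformulation recalled just before the theorem, that both these sequences grow at least geometrically; in particular there is $L_0$ and $\kappa_0$ with $M_{j+\kappa_0}\ge 2M_j$ and $\beta_{j+\kappa_0}M_{j+\kappa_0}^{-1/p^*}\ge 2\beta_j M_j^{-1/p^*}$ for all $j$, so the "relevant" scale near the index $L$ dominates the tails both above and below.

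For the upper bound I would use the standard decomposition of $\id_\beta$ into three pieces relative to the critical level $L$: the "low" part $j<L$, the "critical" part $j\sim L$, and the "high" part $j>L$. On the low part, since $\beta_j M_j^{-1/p^*}$ is geometrically smaller than its value at $L$, the operator is already of small norm after truncation and contributes $\lesssim\beta_L^{-1}M_L^{-(1/p_1-1/p_2)}$ with only $O(M_L)$ entropy budget, using the multiplicativity \eqref{e-multi} and the trivial estimate $e_1\le\|\cdot\|$ on each block together with the geometric decay. On the high part $j>L$, one uses that $\beta_j$ grows geometrically so the weighted $\ell_{q_1}$-tail is controlled, and again Sch\"utt-type estimates on each block $\ell_{p_1}^{M_j}\to\ell_{p_2}^{M_j}$ summed geometrically give the same order. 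The critical block (a bounded number of values of $j$ around $L$) is handled directly by Sch\"utt's theorem at the entropy level $k\sim M_L$, which yields exactly $\beta_L^{-1}M_L^{-(1/p_1-1/p_2)}$. Adding the three contributions via the multiplicativity/additivity rules for entropy numbers gives $e_{2M_L}(\id_\beta)\lesssim\beta_L^{-1}M_L^{-(1/p_1-1/p_2)}$.

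For the lower bound I would restrict $\id_\beta$ to the single critical block: composing with the natural isometric injection $\ell_{p_1}^{M_L}\hookrightarrow\llqe$ (normalised by $\beta_L$) and the norm-one projection $\llqz\to\ell_{p_2}^{M_L}$, one gets $e_k(\id_\beta)\gtrsim\beta_L^{-1}e_k(\id:\ell_{p_1}^{M_L}\to\ell_{p_2}^{M_L})$, and at $k=2M_L$ the finite-dimensional entropy number is $\sim M_L^{-(1/p_1-1/p_2)}$ when $p_1\le p_2$ (and $\sim 1=M_L^{0}$ when $p_1\ge p_2$, matching the exponent $-(1/p_1-1/p_2)_+$ implicit in the statement, since $1/p^*=(1/p_2-1/p_1)_+$). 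This matches the upper bound, so $e_{2M_L}(\id_\beta)\sim\beta_L^{-1}M_L^{-(1/p_1-1/p_2)}$.

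The main obstacle I expect is the bookkeeping in the upper bound: one must choose the entropy budget split among the low/critical/high parts so that the total stays $\lesssim 2M_L$ while each part's contribution is $\lesssim\beta_L^{-1}M_L^{-(1/p_1-1/p_2)}$, and this requires using the geometric-growth consequences of the "almost strongly increasing" hypotheses quantitatively (both for $(M_j)$ and for $(\beta_j M_j^{-1/p^*})$) rather than qualitatively. In particular, on the high part one needs that the dimensions $M_j$ grow fast enough that the extra entropy cost of resolving those blocks is absorbed, while their weighted norm decays; the interplay of the two growth conditions is exactly what makes the single scale $L$ dominant. Everything else reduces to citing Sch\"utt's finite-dimensional estimates and the elementary properties of entropy numbers recorded in the Remark after Definition~\ref{defi-ak}.
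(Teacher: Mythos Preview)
The paper does not give its own proof of this theorem: it is quoted verbatim from \cite[Theorems~3, 4]{Leo-00} and used as a black box in Step~3 of the proof of Theorem~\ref{thm-comp}. So there is nothing in the present paper to compare your proposal against.

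That said, your outline is essentially the standard argument carried out in \cite{Leo-00} (and earlier, in the dyadic case, in \cite{Leo-fs,Leo-fsdona}): Sch\"utt's finite-dimensional estimates for $\id:\ell_{p_1}^M\to\ell_{p_2}^M$, a splitting of $\id_\beta$ into low/critical/high blocks around the level $L$, geometric summation using the almost-strongly-increasing hypotheses for the upper bound, and restriction to the single block $j=L$ for the lower bound. Your identification of the bookkeeping in the upper bound as the main technical point is accurate; this is exactly where the two growth assumptions on $(M_j)$ and on $(\beta_j M_j^{-1/p^*})$ are used quantitatively.

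One small correction: in your parenthetical remark on the lower bound for $p_1\ge p_2$ you write that the finite-dimensional entropy number at $k=2M_L$ is $\sim 1=M_L^0$ and that the statement implicitly carries the exponent $-(1/p_1-1/p_2)_+$. That is not right. For $p_1>p_2$ the norm of $\id:\ell_{p_1}^{M}\to\ell_{p_2}^{M}$ is $M^{1/p_2-1/p_1}$, and the volume argument at $k\sim M$ gives $e_{2M}\sim M^{1/p_2-1/p_1}=M^{-(1/p_1-1/p_2)}$, which is the exponent appearing in the statement without any positive part. The theorem is stated with the genuine difference $1/p_1-1/p_2$, not its positive part; this is consistent with the way it is applied in Step~3 of the proof of Theorem~\ref{thm-comp}, where the factor $M_L^{-(1/p_1-1/p_2)}$ cancels against the weight $2^{-L\nd(1/p_1-1/p_2)}$ in $\beta_L$ for all relations between $p_1$ and $p_2$.
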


\begin{remark}
  Let us mention that in the paper \cite{Leo-fsdona} the sequence is chosen as $M_j \sim 2^{j\nd}$, hence it also includes some limiting cases.
\end{remark}

Now we consider function spaces of generalised smoothness $B_{p,q}^{\sigma}(\Omega)$ as defined in \eqref{A-Omega}, \eqref{A-Omega-norm}.  Our main result reads as follows.

\begin{theorem}\label{thm-comp}
  Let $\Omega$ be an arbitrary bounded domain in $\Rn$  and  $ 0 < p_1, p_2  \le   \infty$, $0 < q_1 , q_2 \le \infty$. Let $\sigma,\tau\in \mathcal{V}$ and   $\sigma_j=\sigma(2^j)$,  $\tau_j=\tau(2^j)$,  $j\in\N_0$.
  \begin{itemize}
	\item[\upshape\bfseries{(i)}] The embedding
	\begin{equation}\label{compactfunction}
		\id_\Omega: \Bsigmae(\Omega)\hookrightarrow \Bsigmaz(\Omega),
	\end{equation}
	is compact if and only if  
		\begin{equation} \label{compactfunction2}
			\left( \sigma_j^{-1} \tau_j\, 2^{j\nd(\frac{1}{p_1}-\frac{1}{p_2})} 2^{j\nd{\frac{1}{p*}}}\right)_{j\in\no} \in \ell_{q*},
	\end{equation}	 
	where the space $\ell_\infty$ has to be replaced by $c_0$  if $q^*=\infty$.
	
	\item[{\upshape\bfseries (ii)}]
	Moreover, if  the sequence
		\begin{equation}\label{compactfunction2*}
 \left( \sigma_j \tau_j^{-1}  2^{-j\nd(\frac{1}{p_1}-\frac{1}{p_2})} 2^{-j\nd{\frac{1}{p*}}}\right)_{j\in\no} \quad \text{is almost strongly increasing,}
	\end{equation}	
	 that is, its lower Boyd index is positive, then we have 
	\begin{equation} \label{entropyfunction}
		e_{k} (\id_\Omega :  \Bsigmae(\Omega)\rightarrow  \Bsigmaz(\Omega))    \sim \left(\frac{\sigma(k^{1/\nd})}{\tau(k^{1/\nd})}\right)^{-1} ~~.
	\end{equation}
\end{itemize}
\end{theorem}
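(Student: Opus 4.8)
The strategy is to transfer the entire problem to the level of weighted vector-valued sequence spaces via the wavelet isomorphism \eqref{wavebound}, and then invoke the sequence-space results of Leopold recalled above (Proposition~\ref{compact-seq} and Theorem~\ref{entropysequences}). The starting point is that, by \eqref{wavebound}, for an admissible sequence $\sigma$ with $\sigma_0=1$ we have
\[
  \Bsigmae(\Omega) \text{ isomorphic to } \ell_{q_1}\bigl(\sigma_j 2^{-j\nd/p_1}\,\ell_{p_1}^{M_j}\bigr),
  \qquad
  \Bsigmaz(\Omega) \text{ isomorphic to } \ell_{q_2}\bigl(\tau_j 2^{-j\nd/p_2}\,\ell_{p_2}^{M_j}\bigr),
\]
with the \emph{same} $M_j\sim 2^{j\nd}$ in both cases, since the set of relevant wavelet indices depends only on $\Omega$ and not on the smoothness. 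Under these isomorphisms the embedding $\id_\Omega$ corresponds, up to isomorphism, to the diagonal embedding $\id_\beta:\ell_{q_1}(\beta_j\ell_{p_1}^{M_j})\to\ell_{q_2}(\ell_{p_2}^{M_j})$ with weight sequence $\beta_j := \sigma_j\tau_j^{-1}2^{-j\nd/p_1}2^{j\nd/p_2} = \sigma_j\tau_j^{-1}2^{-j\nd(\frac1{p_1}-\frac1{p_2})}$ — one simply pulls the weight $\tau_j 2^{-j\nd/p_2}$ from the target side to the source side. One small technical point to dispatch first: the wavelet theorem requires $\sigma_0=1$, $\tau_0=1$; this is harmless since replacing $\sigma$, $\tau$ by $\sigma/\sigma_0$, $\tau/\tau_0$ only changes norms by a fixed constant and changes $\beta_j$ by a fixed constant, affecting neither compactness nor the asymptotics of entropy numbers.

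For part (i), I then apply Proposition~\ref{compact-seq} directly: $\id_\beta$ is compact if and only if $\bigl(\beta_j^{-1}M_j^{1/p^\ast}\bigr)_{j\in\no}\in\ell_{q^\ast}$ (with $c_0$ when $q^\ast=\infty$). Substituting $\beta_j^{-1} = \sigma_j^{-1}\tau_j\,2^{j\nd(\frac1{p_1}-\frac1{p_2})}$ and $M_j^{1/p^\ast}\sim 2^{j\nd/p^\ast}$ gives exactly condition \eqref{compactfunction2}. Since $\sigma_j=\sigma(2^j)$, $\tau_j=\tau(2^j)$ for the given functions in $\mathcal V$, and since $B$-spaces of generalised smoothness depend only on the equivalence class of the admissible sequence, the statement for $\sigma,\tau\in\mathcal V$ is the same as the statement for the associated admissible sequences; no extra work is needed.

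For part (ii), I apply Theorem~\ref{entropysequences} with $M_j\sim 2^{j\nd}$ (an admissible, almost strongly increasing sequence of natural numbers) and the above weight sequence $\beta_j$. The hypothesis of that theorem is precisely that $\bigl(\beta_j M_j^{-1/p^\ast}\bigr)_j$ be almost strongly increasing, which — using $\beta_j M_j^{-1/p^\ast} \sim \sigma_j\tau_j^{-1}2^{-j\nd(\frac1{p_1}-\frac1{p_2})}2^{-j\nd/p^\ast}$ — is exactly assumption \eqref{compactfunction2*}. The conclusion of Theorem~\ref{entropysequences} then reads
\[
  e_{2M_L}(\id_\beta) \sim \beta_L^{-1} M_L^{-(\frac1{p_1}-\frac1{p_2})}
  = \sigma_L^{-1}\tau_L\,2^{L\nd(\frac1{p_1}-\frac1{p_2})}\cdot 2^{-L\nd(\frac1{p_1}-\frac1{p_2})}
  = \sigma_L^{-1}\tau_L \sim \Bigl(\frac{\sigma(2^L)}{\tau(2^L)}\Bigr)^{-1}.
\]
Since $M_L\sim 2^{L\nd}$, the indices $k=2M_L$ run, as $L\to\infty$, through a lacunary sequence with bounded ratios, so a standard interpolation-of-the-index argument (monotonicity of $e_k$ in $k$ together with the doubling/admissibility of $L\mapsto\sigma(2^L)/\tau(2^L)$, which follows from $\sigma,\tau$ being admissible) upgrades this to the full scale $e_k(\id_\Omega)\sim (\sigma(k^{1/\nd})/\tau(k^{1/\nd}))^{-1}$ for all $k\in\N$: given $k$, pick $L$ with $2M_L\le k < 2M_{L+1}$, sandwich $e_k$ between $e_{2M_{L+1}}$ and $e_{2M_L}$, and note that $k^{1/\nd}\sim 2^L$ together with admissibility of $\sigma,\tau$ forces $\sigma(k^{1/\nd})/\tau(k^{1/\nd})\sim \sigma(2^L)/\tau(2^L)$.

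\textbf{Main obstacle.} The conceptual content is entirely in the two cited sequence-space results, so the only real work is bookkeeping: making sure the weight $\beta_j$ is read off correctly when transferring the weights across the embedding (easy to get a sign wrong in the exponent of $2^{j\nd/p}$), checking that the normalisation $\sigma_0=\tau_0=1$ costs nothing, and carrying out carefully the index-interpolation step that passes from the values $e_{2M_L}$ at lacunary indices to the full asymptotic $e_k\sim(\sigma(k^{1/\nd})/\tau(k^{1/\nd}))^{-1}$. The last point is where one must genuinely use that $\sigma$ and $\tau$ are admissible (equivalently, that $t\mapsto\sigma(t)/\tau(t)$ has finite Boyd indices), so that the quotient does not oscillate between consecutive dyadic scales.
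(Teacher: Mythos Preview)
Your argument is correct when $\Omega$ is a bounded \emph{Lipschitz} domain, but the theorem is stated for an \emph{arbitrary} bounded domain. The isomorphism \eqref{wavebound} that you invoke as your starting point is established in the paper only under the Lipschitz hypothesis, since its derivation relies on Rychkov's universal extension operator. For a general bounded domain there is no such extension operator available, and hence no ready-made identification of $B^\sigma_{p,q}(\Omega)$ with a fixed sequence space that would let you simply transport $\id_\Omega$ to $\id_\beta$.

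The paper's proof circumvents this by arguing the two directions separately at the level of wavelets rather than via a global isomorphism. For the sufficiency and the upper entropy bound, any $f\in\Bsigmae(\Omega)$ is (by definition of the restriction space) the restriction of some $g\in\Bsigmae(\Rn)$; one may take $g$ with wavelet coefficients supported in a fixed large cube $Q\supset\Omega$, and then work in the finite-$M_j$ sequence space attached to $Q$. For the necessity and the lower entropy bound, one picks a small dyadic cube $Q_{\nu,m_0}\subset\Omega$, uses only wavelets supported inside $\Omega$, and applies the genuine isomorphism \eqref{wavebound} on the \emph{cube} $Q_\nu$ (which is Lipschitz), together with $\ext_\nu$ and the restriction $\re_\Omega$, to build the commutative diagram \eqref{CD2a}. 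Your diagonal-operator rescaling (\eqref{CD1}--\eqref{CD2}) and the lacunary-to-full index interpolation are exactly what the paper does once this setup is in place; what is missing in your proposal is precisely this two-cube sandwiching that replaces the unavailable global isomorphism on a non-Lipschitz $\Omega$.
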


\begin{proof}
	\emph{Step 1.}  We choose $\delta>0$ and put $\Omega_\delta=\{x\in \Rn: \; d(x,\Omega)<\delta\}$. Let $Q$ be  a dyadic cube such that    $\supp \psi^\ell_{j,m}\subset Q$ for any element of the wavelet basis $\psi^\ell_{j,m}$ such that $\supp \psi^\ell_{j,m}\cap \Omega_\delta\not= \emptyset$.
	 
	If  $f\in \Bsigmae(\Omega)$  and $\|f|\Bsigmae(\Omega)\|\le 1$, then one can find some  $g\in \Bsigmae(\Rn)$ such that $f=g|_\Omega$,  $\|g|\Bsigmae(\Rn)\|\le 1+\delta$ and  $\lambda^\ell_{j,m}(g) =\langle g,\psi^\ell_{j,m}\rangle =0$ if $\supp \psi^\ell_{j,m}\cap \Omega_\delta = \emptyset$. Then  $\supp g\subset Q$ and in consequence 
	$\lambda^\ell_{j,m}(g)\in \ell_{q_1}\big(\sigma_j 2^{-jd/p_1} \ell_{p_1}^{M_j}\big)$, with $M_j= |Q|2^{j\nd}$.  
		
	On the other hand, one can find a number $\nu\in \N$ such that a dyadic cube $Q_{\nu,m_0}\subset \Omega$ for some $m_0\in \Zn$, and  $\supp \psi^\ell_{j,m}\subset \Omega$ if $j\ge \nu$ and  $\supp \psi^\ell_{j,m}\cap \overline{Q_{\nu,m_0}} \not= \emptyset$. 
	Let $I_\nu$ be a subset of $I$ consisting of all the triples $(\ell,j,m) $  that satisfy the following two conditions $j\ge \nu$ and  $\supp \psi^\ell_{j,m}\cap \overline{Q_{\nu,m_0}} \not= \emptyset$. 
	Then any distribution   
	\[f   =  \sum_{(\ell,j,m) \in I_\nu} \lambda^\ell_{j,m} \psi^\ell_{j,m}\]
	belongs to $\Bsigmae(\Omega)$. Please note that 
	$\#\{m:\; \supp \psi^\ell_{j,m}\cap \overline{Q_{\nu,m_0}} \not= \emptyset \} \sim 2^{(j-\nu)\nd}$. \\

	\emph{Step 2.} We prove part (i). The condition \eqref{compactfunction2} implies the compactness of the embedding
	\begin{equation} \label{compactfunction2a}
	\id_\beta : \ell_{q_1}\left(\beta_j \ell_{p_1}^{M_j}\right) \hookrightarrow
	\ell_{q_2}\left(\ell_{p_2}^{M_j}\right),\qquad  
	\beta_j = {\sigma_j}{\tau_j^{-1}} 2^{-j\nd(\frac{1}{p_1}-\frac{1}{p_2})},
\end{equation}
cf. Proposition~\ref{compact-seq}. 
Let $D_\gamma$, $\gamma=(\gamma_j)_{j\in\N_0}=(\tau_j 2^{-j\nd/p_2})_{j\in\N_0}$, denote the diagonal operator $\lambda^\ell_{j,m}\mapsto \gamma_j\lambda^\ell_{j,m}$ and let $D_{\gamma^{-1}}$ be its inverse. Then we have the following commutative diagrams  
	\begin{align} \label{CD1}
	\begin{CD}
			\ell_{q_1}\left( \sigma_j 2^{-j\nd/p_1}\ell_{p_1}^{M_j}\right) @>>> \ell_{q_2}\left( \tau_j2^{-j\nd/p_2} \ell_{p_2}^{M_j}\right) \\
		@VD_{\gamma}VV                                     @AAD_{\gamma^{-1}}A\\
		\ell_{q_1}\left( \beta_j \ell_{p_1}^{M_j}\right) @>>> 
		\ell_{q_2}\left( \ell_{p_2}^{M_j}\right)
	\end{CD}	
\intertext{and} 
	\begin{CD} \label{CD2}
	\ell_{q_1}\left( \beta_j \ell_{p_1}^{M_j}\right) @>>> 
	\ell_{q_2}\left( \ell_{p_2}^{M_j}\right)\\	
	@VD_{\gamma^{-1}} VV                                     @AAD_{\gamma}A\\
	\ell_{q_1}\left( \sigma_j 2^{-j\nd/p_1}\ell_{p_1}^{M_j}\right) @>>> \ell_{q_2}\left( \tau_j 2^{-j\nd/p_2} \ell_{p_2}^{M_j}\right),  	
\end{CD}
    \end{align}
with	$\|D_\gamma\|= \|D_{\gamma^{-1}}\|=1$. The above  argument with the diagonal operators shows that the compactness of \eqref{compactfunction2a} is equivalent to the compactness of the embedding
\begin{equation}\label{compactfunction3}
	\id : \ell_{q_1}\left(\sigma_j 2^{-j\nd/p_1} \ell_{p_1}^{M_j}\right) \hookrightarrow	\ell_{q_2}\left(\tau_j2^{-j\nd/p_2}\ell_{p_2}^{M_j}\right) .
\end{equation} 
	
	Let $f_n\in \Bsigmae(\Omega)$  be a bounded sequence. We may assume that   $\|f_n|\Bsigmae(\Omega)\|\le 1$  for any $n\in \N$. 
	Then there exists a sequence $(g_n)_n$, bounded in $\Bsigmae(\Rn)$ such that $\lambda^\ell_{j,m}(g_n)\in \ell_{q_1}\left(\sigma_j2^{-j\nd/p_1} \ell_{p_1}^{M_j}\right)$.  Moreover the sequence $\big(\lambda^\ell_{j,m}(g_n)\big)_n$ is bounded in $\ell_{q_1}\left(\sigma_j2^{-j\nd/p_1} \ell_{p_1}^{M_j}\right)$. If the condition \eqref{compactfunction2} is fulfilled, then the embedding \eqref{compactfunction3} is compact and we can find a subsequence $\big(\lambda^\ell_{j,m}(g_{n_k})\big)_k$ convergent in   $\ell_{q_2}\left( \tau_j 2^{-j\nd/p_2} \ell_{p_2}^{M_j}\right)$. Now the wavelet decomposition theorem implies that the the sequence $(g_{n_k})_k$ is convergent in $\Bsigmaz(\Rn)$ and in consequence $(f_{n_k})_k$ is convergent in $\Bsigmaz(\Omega)$. 
	
	On the other hand, if the condition \eqref{compactfunction2} does not hold, then the embedding \eqref{compactfunction3} is not compact. One can find a sequence  $\lambda(n)=(\lambda^\ell_{j,m}(n))_{j,m}$ in $\ell_{q_1}\left( \sigma_j 2^{-j\nd/p_1} \ell_{p_1}^{M_j}\right)$ that has no convergent subsequence in $\ell_{q_2}\left( \tau_j 2^{-j\nd/p_2} \ell_{p_2}^{M_j}\right)$. We may assume that $\lambda^\ell_{j,m}(n)=0$ if $j<\nu$ or  $\supp \psi^\ell_{j,m}\cap \overline{Q_{\nu,m_0}} = \emptyset$. Then the sequence
	\[g_n   =  \sum_{(\ell,j,m) \in I_\nu} \lambda^\ell_{j,m}(n)\, \psi^\ell_{j,m}\]
	is bounded in $\Bsigmae(\Rn)$ and has no subsequence that converges in $\Bsigmaz(\Rn)$. But 
	$ \|g_n|\Bsigmae(\Omega)\|=\|g_n|\Bsigmae(\Rn)\|$, and $ \|g_n|\Bsigmaz(\Omega)\|=\|g_n|\Bsigmaz(\Rn)\|$. This proves the necessity of the condition \eqref{compactfunction2}. 

   \emph{Step 3.} Now we deal with the asymptotic behaviour of the corresponding entropy numbers. We may assume that the dyadic cube we have chosen in Step~1 has a volume  bigger than $1$. Then the sequence $M_j$ is admissible and strongly increasing. It follows from the commutative diagrams \eqref{CD1} and \eqref{CD2} that 
   \begin{equation}
   	e_k\left(\id: \ell_{q_1}\left( \sigma_j 2^{-j\nd/p_1} \ell_{p_1}^{M_j}\right)\hookrightarrow \ell_{q_2}\left( \tau_j 2^{-j\nd/p_2} \ell_{p_2}^{M_j}\right)\right)\, \sim \, e_k(\id_\beta) .
      \end{equation} 
   So it follows  from Proposition~\ref{entropysequences} that 
   	\begin{align}
   		e_{2M_L}= & e_{2M_L} \left(\id : \ell_{q_1}\left( \sigma_j 2^{-j\nd/p_1} \ell_{p_1}^{M_j}\right)\hookrightarrow \ell_{q_2}\left( \tau_j 2^{-j\nd/p_2} \ell_{p_2}^{M_j}\right)\right)  \\ \sim & \, \beta_L^{-1} M_L^{-(\frac{1}{p_1} - \frac{1}{p_2})} =  \sigma_L^{-1}\tau_L~~, \nonumber
   	\end{align}
   if the sequence $(\beta_jM_j^{-1/p^*})_{j\in\N_0}$ is almost strongly increasing. We prove that in that case for any $k\in \N$ we can find $2^{k-1}$ balls of radius $r\sim\sigma(k^{1/\nd})/\tau(k^{1/\nd})$ in $\Bsigmaz(\Omega)$  that cover the unit ball of the space $\Bsigmae(\Omega)$.    
   Let $f\in \Bsigmae(\Omega)$ with $\|f | \Bsigmae(\Omega)\|\le 1$. According to Step~1 there is a function $g\in \Bsigmae(\Rn)$ with $\|g | \Bsigmae(\Rn)\|\le 1+\delta$ such that $f=g|_\Omega$. 
   Moreover there exists  a positive constant $C_\delta>0$ depending on $\delta$ such that $\|\lambda^\ell_{j,m}(g)|\ell_{q_1}\big( \sigma_j2^{-j\nd/p_1} \ell_{p_1}^{M_j}\big)\|\le C_\delta$. 
   
   Let $B(\lambda,r)$ denote the ball in the space 
   $\ell_{q_2}\left( \tau_j2^{-j\nd/p_2} \ell_{p_2}^{M_j}\right)$ and $\widetilde{B}(\lambda,r)$ denote the ball in the spaces $\ell_{q_1}\left( \sigma_j2^{-j\nd/p_1} \ell_{p_1}^{M_j}\right)$. 
   For any $\varepsilon> C_\delta e_k$ we can find sequences $\lambda(n)= (\lambda^\ell_{j,m}(n))_{\ell,j,m}$, $n=1,\ldots, 2^{k-1}$,  such that the balls $B(\lambda(n),\varepsilon)$ cover in $\ell_{q_2}\left( \tau_j 2^{-j\nd/p_2} \ell_{p_2}^{M_j}\right)$ the ball $\widetilde{B}(0, C_\delta)$. Now taking 
   \[ g_n   =  \sum_{(\ell,j,m) \in I} \lambda^\ell_{j,m}(n) \psi^\ell_{j,m}\, , \qquad n=1,\dots , 2^{k-1}, \]
   we get $2^{k-1}$ functions in  $\Bsigmaz(\Omega)$ and $2^{k-1}$ balls $B(g_n, r)$ with $r\sim \varepsilon$ that cover in  $\Bsigmaz(\Rn)$ the unit ball of the space $\Bsigmae(\Rn)$. In consequence we get $2^{k-1}$ balls in $\Bsigmaz(\Omega)$ centred  at $f_n=g_n|_\Omega$ with radius $r$ that cover the unit ball of the space $\Bsigmae(\Omega)$. This results in the estimate from above in \eqref{entropyfunction}.
   
   On the other hand, the cube $Q_\nu$ is a bounded Lipschitz domain. So there is a universal extension operator 
   \[\ext_\nu : \Bsigmae(Q_\nu)\rightarrow \Bsigmae(\Rn), \quad\text{and}\quad  \ext_\nu :\Bsigmaz(Q_\nu)\rightarrow \Bsigmaz(\Rn), \]
   cf. Section~\ref{wavdecomp}. Moreover, the space $\Bsigmae(Q_\nu)$ is isomorphic to $ \ell_{q_1}(\sigma_j2^{-j\nd/p_1} \ell_{p_1}^{M_j})$,  and $\Bsigmaz(Q_\nu)$ is isomorphic to $ \ell_{q_2}(\tau_j 2^{-j\nd/p_2} \ell_{p_2}^{M_j})$, $M_j\sim 2^{j\nd}$, recall \eqref{wavebound}. Let $T$ denote this isomorphism. Then we arrive at the following commutative diagram,
   \begin{align}
   		\begin{CD} \label{CD2a}
   		\ell_{q_1}\left( \sigma_j2^{-j\nd/p_1}\ell_{p_1}^{M_j}\right) @>{ T^{-1}}>> \Bsigmae(Q_\nu)
   		@>{\re_\Omega\circ \ext_\nu}>>  \Bsigmae(\Omega)\\	
   		@V{\id} VV                 @.                  @VV{\id_\Omega}V\\
   		\ell_{q_2}\left( \tau_j 2^{-j\nd/p_2}\ell_{p_2}^{M_j}\right) @<{T}<<  \Bsigmaz(Q_\nu)
   		@<{\re_\nu}<<  \Bsigmaz(\Omega),  	
   	\end{CD}
   \end{align} 
In view of the multiplicativity of the entropy numbers, this finally yields the estimate from below in \eqref{entropyfunction}. 
\end{proof}
\begin{remark}
	If the assumption \eqref{compactfunction2} is satisfied, but \eqref{compactfunction2*} is not, we will call such an embedding {\em limiting}. Otherwise, i.e., if \eqref{compactfunction2*} holds, the embedding will be called non-limiting. Part (ii) of the last above theorem describes the asymptotic behaviour of entropy numbers in the non-limiting case. Let us note that in the non-limiting situation the behaviour of entropy numbers of embeddings defined on arbitrary domains is the same as for embeddings defined on Lipschitz or smooth domains.  
\end{remark}


First results were obtained in special cases in 1998, still without 
the use of sequence space results and wavelet description. We now briefly list them now as examples of our general outcome.

\begin{Exam}\label{exam-Leo}
  Let for $b\in\real$, $s\in\real$, and $\sigma_j = 2^{js} (1+j)^b$
, the spaces $B^\sigma_{p,q}(\Omega)$ be denoted by $B_{p,q}^{s,b}(\Omega)$.
    Assume $-\infty < s_2 < s_1 < \infty$,
$b\in\real$, $0<p_1 \le p_2 \le \infty$, $0 < q_1,q_2 \le \infty$ and suppose that $s_1-s_2 - \nd(1/p_1 - 1/p_2) > 0$.  It was proved by Leopold in {\cite{Leo-fs}} 
that
\begin{equation}
e_k(\id: B_{p_1,q_{1}}^{s_1,b} (\Omega )\hookrightarrow B_{p_2,q_{2}}^{s_2}(\Omega)) \sim k^{-(s_1-s_2)/\nd} (1+\log k)^{-b}, \quad k\in \N.
\end{equation}
{This is what we called the non-limiting case above.} 
{In the limiting case for $p_1=p_2=p$  and $s_1=s_2=s$  
it was obtained in \cite{Leo-fs}, 
that if $q_1\leq q_2$ and $b>0$, then the embedding} 
\begin{equation}
\id: B_{p,q_{1}}^{s,b} (\Omega )\hookrightarrow B_{p,q_{2}}^s (\Omega )
\end{equation}
is compact and 
\begin{equation}
 e_k (\id: B_{p,q_{1}}^{s,b} (\Omega )\hookrightarrow
B_{p,q_{2}}^s (\Omega )) \sim \,(1+\log k)^{-b},  \quad k\in\nat.
\end{equation}
The result in case of $q_1 > q_2 $ in  \cite{Leo-fs}  
was not sharp, but finally improved by Cobos and K\"uhn \cite[Corollary 4.6]{CoKu}, such that the final result reads as
\begin{equation}
  e_k (\id: B_{p,q_{1}}^{s,b} (\Omega )\mapsto B_{p,q_{2}}^s
(\Omega )) \sim \,(1+\log k)^{-b+1/q^*}, \quad k\in\nat,
\label{ek-limLeo}
\end{equation}
in all cases. 
\end{Exam}

\begin{Exam} \label{exam-SV}
Cobos and K\"uhn considered in \cite{CoKu} also Besov spaces of generalised smoothness with $\sigma_j = 2^{js} \Psi(2^j)$,  where $\Psi$ is  a slowly varying function, this is, 
a Lebesgue measurable function $\Psi:[1,\infty) \rightarrow (0,\infty)$ so that 
$$
\lim_{t\rightarrow \infty}\frac{\Psi(st)}{\Psi(t)}=1 \quad \text{for all } s\ge 1,
$$
cf. e.g.  \cite{BGT, HM04}. In this case, the space $B_{p,q}^{\sigma}(\Omega)$ is denoted by $B_{p,q}^{s, \Psi}(\Omega)$.  Clearly $B_{p,q}^{s,b}(\Omega)=B_{p,q}^{s, \Psi_b}(\Omega)$, for $\Psi_b(t)=(1+ \log t)^b$, $b\in \real$. If $\Omega$ is a bounded domain in $\Rn$ so that there exists an extension operator from $B_{p,q}^{s, \Psi}(\Omega)$ to $B_{p,q}^{s, \Psi}(\Rn)$, which is the case if $\Omega$ is a bounded Lipschitz domain, cf. Section~\ref{wavdecomp}, then by \cite[Theorem 4.5]{CoKu} we have the following. 
For $0 < p \le \infty$, $0 < q_1, q_2 \le \infty$, $s \in \R$ and $\Psi$  an increasing slowly varying function, the embedding
$$
\id : B^{s,\Psi}_{p,q_1}(\Omega) \to B^{s}_{p,q_2}(\Omega)
$$
is compact if, and only if, $\left( \Psi(2^j)^{-1} \right)_{j\in\no} \in \ell_{q^*}$ (with $\ell_{q^*}$ replaced by $ c_0$ when $q^*=\infty$), and, moreover, 
\[ e_k \big(\id: B^{s,\Psi}_{p,q_1}(\Omega) \to B^{s}_{p,q_2}(\Omega)\big) \sim 
\begin{cases} \Psi(k^{1/\nd})^{-1} & \text{if } q_1 \le q_2, \smallskip \\
(\int_{k^{1/\nd}}^\infty  \Psi(t)^{-q^\ast} \dint t/t)^{1/q^*} & \text{if } q_1 > q_2 .
\end{cases}
\]
Beyond the example with logarithms $\Psi_b$, other examples of slowly varying functions $\Psi$ were worked out in \cite{CoKu}, cf. \cite[Corollaries 4.7, 4.8]{CoKu}. \\
The previous result concerns the limiting case. In the non-limiting case, accordingly to Theorem~\ref{thm-comp}(ii) we can state the following, Let $ 0 < p_i \le   \infty$, $0 < q_i \le \infty$, $s_i\in \real$, $\Psi_i$ be slowly varying functions, $i=1,2$, and assume
$$
s_1-s_2 > \nd\left(\frac{1}{p_1}-\frac{1}{p_2}\right)_+.
$$
Then 
$$
 e_k \big(\id: B^{s_1,\Psi_1}_{p_1,q_1}(\Omega) \to B^{s_2,\Psi_2}_{p_2,q_2}(\Omega)\big) \sim   k^{-\frac{s_1-s_2}{\nd}}  \frac{\Psi_2(k^{1/\nd})}{\Psi_1(k^{1/\nd})}, \quad k\in \N .
 $$
 Obviously this can also be understood as a generalisation of the classical result recalled in Proposition~\ref{prop-spaces-dom}.
\end{Exam}

We return to the spaces defined in Subsection~\ref{EdNet-approach} by Edmunds and Netrusov.

  \begin{example}\label{Ex-Ed-Net}
Let $\nd\in\N$, $Q=(0,1)^\nd$, $0 < q_1, q_2 \le \infty$, let $0 < p_1 < p_2 \le \infty$, put $1/p_1 - 1/p_2 = \alpha$. Then Edmunds and Netrusov found in \cite[Theorem 4.2]{EdNe} the following.
\benu[\bfseries\upshape (i)]
\item Let $\sigma, \tau \in \mathcal{V}$.  
Suppose that $1/q_2 - 1/q_1 \le - \alpha$ and for all $
k\in \N$ put
\begin{equation}
  A(k) = \sup_{u \ge k} \frac{\tau(u^{1/\nd})}{\sigma(u^{1/\nd})}u^\alpha \left( \min\left\{\frac{\log{(\frac{u}{k} + 1)}}{k},1\right\}\right)^\alpha .
  \label{Ak_EN}
  \end{equation}
Let $\id:B^{\sigma}_{p_1,q_1}(Q) \hookrightarrow B^{\tau}_{p_2,q_2}(Q)$ be the {continuous} embedding. Then, for all $k\in\nat$,  
\begin{equation}
  e_k(\id:B^{\sigma}_{p_1,q_1}(Q) \hookrightarrow B^{\tau}_{p_2,q_2}(Q)) \sim A(k) .
\label{ek_EN}
\end{equation}

\item Let $p_1, p_2$ and $\sigma$ satisfy the same conditions as in (i), let $0<q_2\leq q_1\leq\infty$, and  $\beta > 1/q^\ast$. Assume that $\tau(t) = \sigma(t)t^{-\nd\alpha}(1+\log{t})^{-\beta}$. Then, for all $k\in\N$,
  \begin{equation}
    e_k(\id:B^{\sigma}_{p_1,q_1}(Q) \hookrightarrow B^{\tau}_{p_2,q_2}(Q)) \sim \begin{cases}
      k^{-\alpha} (1+ \log k)^{-\beta + 2\alpha + 1/q^\ast} & \text{if}\ \beta > \frac{1}{q^\ast} + 2\alpha, \smallskip \\ k^{-\alpha}(1+ \log k)^{\alpha + 1/q^\ast}  & \text{if}\ \beta = \frac{1}{q^\ast} + 2\alpha, \smallskip \\ k^{-(\beta+1/q^\ast)/2}& \text{if}\ \beta < \frac{1}{q^\ast} + 2\alpha.
    \end{cases}
    \label{ek_EN-ii}
  \end{equation}
\eenu

\end{example}  
	
\begin{remark}
  Note that the assumptions in Example~\ref{Ex-Ed-Net} imply $\alpha>0$ and thus, in part (i), $q_1<q_2$, that is, $p^\ast= q^\ast=\infty$. Then, according to \eqref{ek_EN}, the embedding $\id:B^{\sigma}_{p_1,q_1}(Q) \hookrightarrow B^{\tau}_{p_2,q_2}(Q)$ is compact if, and only if, $\lim_{k\to\infty} A(k)=0$, which coincides with Theorem~\ref{thm-comp}(i) in this situation. Moreover, if  {\eqref{compactfunction2*}} is satisfied then this can be understood in the above setting as the supremum in \eqref{Ak_EN} being attained in $u=k$ which leads to the coincidence of \eqref{ek_EN} and \eqref{entropyfunction} in this case,
  \[
    e_k(\id:B^{\sigma}_{p_1,q_1}(Q) \hookrightarrow B^{\tau}_{p_2,q_2}(Q)) \sim A(k) \sim \frac{\tau(k^{1/\nd})}{\sigma(k^{1/d})}, \quad k\in\nat.
    \]

 However, without this additional   assumption \eqref{compactfunction2*}, we still have the sharp result \eqref{ek_EN}, unlike in case of Theorem~\ref{thm-comp}.
It would be interesting to consider further examples which do not satisfy \eqref{compactfunction}, but still fit in the above scheme of the Edmunds-Netrusov result \cite[Theorem 4.2]{EdNe}.

Concerning part (ii), the special setting refers to 
$ \sigma_j \tau_j^{-1}  2^{-j\nd(\frac{1}{p_1}-\frac{1}{p_2})}  = (1+j)^\beta$, $j\in\nat$, where $\beta>\frac{1}{q^\ast}$. Then \eqref{ek_EN-ii} can be understood as an extension of \eqref{ek-limLeo} from the case $\alpha=0$, that is, $p_1=p_2$, in Example~\ref{exam-Leo} to the case $\alpha>0$, that is, $p_1<p_2$, in Example~\ref{Ex-Ed-Net}. Plainly, when $\alpha=0$, the assumption $\beta>1/q^\ast$ excludes the second and third case in \eqref{ek_EN-ii}.
\end{remark}

}

\begin{remark}
	The estimates \eqref{ek_EN} and \eqref{ek_EN-ii} hold for any arbitrary bounded domain $\Omega$ in $\rn$. If $Q_1$ and $Q_2$ are cubes such that $2Q_1\subset \Omega \subset \frac{1}{2}Q_2$, then using the same general idea as in Step~3 of the proof of Theorem~\ref{thm-comp} we can show that there are constants  $C_1, C_2 >0$ such that  
	\begin{align*} 
		C_1\, e_k(\id:B^{\sigma}_{p_1,q_1}(Q_1) \hookrightarrow B^{\tau}_{p_2,q_2}(Q_1)) \,&\le\, e_k(\id:B^{\sigma}_{p_1,q_1}(\Omega) \hookrightarrow B^{\tau}_{p_2,q_2}(\Omega))\\
		& \le\, C_2\,  e_k(\id:B^{\sigma}_{p_1,q_1}(Q_2) \hookrightarrow B^{\tau}_{p_2,q_2}(Q_2)).
	\end{align*}  
But, we have  obviously $e_k(\id:B^{\sigma}_{p_1,q_1}(Q_1) \hookrightarrow B^{\tau}_{p_2,q_2}(Q_1))\,\sim\,  e_k(\id:B^{\sigma}_{p_1,q_1}(Q_2) \hookrightarrow B^{\tau}_{p_2,q_2}(Q_2))$. 
\end{remark}

\begin{remark}
  Plainly, there are several open problems here, e.g., what happens in the {\em limiting} case of a compact embedding in general, that is, referring to (ii) in Example~\ref{Ex-Ed-Net} when $q_2\leq q_1$, but we do not have the special coupling of $\sigma$ and $\tau$ by the logarithmic term -- can one describe the asymptotic behaviour of the entropy numbers more precisely? Another interesting question would be to study the situation in Theorem~\ref{thm-comp} when the lower Boyd index equals 0, but the upper one is positive. We discussed similar problems in case of limiting (continuous) embeddings on $\Rn$ in our paper \cite{HM08}. But this is out of the scope of the present paper.
\end{remark}

\begin{remark}
  Let us briefly mention that parallel studies for spaces, defined as trace spaces on so-called $(\alpha,\Psi)$ or $h$-sets can be found in \cite{Mo2,Bri-tese,Br02}. 
This concerns compactness results as well as estimates for the corresponding entropy numbers.
\end{remark}

\section{Nuclearity of embeddings}\label{sec-nuc}
First we recall some fundamentals of the concept and important results we rely on in the sequel.
Let $X,Y$ be Banach spaces, $T\in \mathcal{L}(X,Y)$ a linear and bounded operator. Then $T$ is called {\em nuclear}, denoted by $T\in\mathcal{N}(X,Y)$, if there exist elements $a_j\in X'$, the dual space of $X$, and $y_j\in Y$, $j\in\mathbb{N}$, such that $\sum_{j=1}^\infty \|a_j\|_{X'} \|y_j\|_Y < \infty$ and a nuclear representation $Tx=\sum_{j=1}^\infty a_j(x) y_j$ for any $x\in X$. Together with the {\em nuclear norm}
\[
 \nn{T}:=\inf\Big\{ \sum_{j=1}^\infty   \|a_j\|_{X'} \|y_j\|_Y:\ T =\sum_{j=1}^\infty a_j(\cdot) y_j\Big\},
  \]
  where the infimum is taken over all nuclear representations of $T$, the space $\mathcal{N}(X,Y)$ becomes a Banach space. It is obvious that any nuclear operator can be approximated by finite rank operators, hence 
  nuclear operators are, in particular, compact.

  \begin{remark}
    This concept has been introduced by Grothendieck \cite{grothendieck} and was intensively studied afterwards, cf. 
    \cite{Pie-op-2,pie-84,Pie-snumb} and also \cite{pie-history} for some history.
There exist extensions of the concept to $r$-nuclear operators, $0<r<\infty$, where $r=1$ refers to the nuclearity. It is well-known that {the family of all nuclear operators $\mathcal{N}:=\bigcup_{X,Y}\mathcal{N}(X,Y)$ is a Banach operator ideal.}
In Hilbert spaces $H_1,H_2$, the nuclear operators $\mathcal{N}(H_1,H_2)$ coincide with the trace class $S_1(H_1,H_2)$, consisting of those $T$ with singular numbers $(s_n(T))_n \in \ell_1$.
\end{remark}

  We collect some more or less well-known facts needed in the sequel.
\begin{proposition}\label{coll-nuc}
\benu[\upshape\bfseries (i)]
\item  If $X$ is an $n$-dimensional Banach space, $n\in\N$, then $\ \nn{\id:X\rightarrow X}= n$.  
\item  For any Banach space $X$ and any bounded linear operator $T:\ell^n_\infty\rightarrow X$ we have 
\[\nn{T} = \sum_{i=1}^n \|Te_i\| .\]
\item  If $T\in \mathcal{N}(X,Y)$ 
and {$S\in \mathcal{L}(Y,Y_0)$ and $R\in \mathcal{L}(X_0,X)$}, then $STR \in \mathcal{N}(X_0,Y_0)$  and 
\[ \nn{STR} \le \|S\| \|R\| \nn{T} . \] 
\eenu
\end{proposition}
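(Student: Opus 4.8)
The plan is to handle the three assertions in the order (iii), (ii), (i), since each one reduces to massaging a nuclear representation against the infimum defining $\nn{\cdot}$. For (iii) I would start from an arbitrary nuclear representation $Tx=\sum_{j}a_j(x)\,y_j$ of $T\in\mathcal{N}(X,Y)$, with $a_j\in X'$, $y_j\in Y$ and $\sum_j\|a_j\|_{X'}\|y_j\|_Y<\infty$, and observe that for $x_0\in X_0$
\[
STR\,x_0=\sum_j a_j(Rx_0)\,Sy_j=\sum_j (R'a_j)(x_0)\,(Sy_j),
\]
where $R'\in\mathcal{L}(X',X_0')$ is the Banach-space adjoint of $R$. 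Since $\|R'a_j\|_{X_0'}\le\|R\|\,\|a_j\|_{X'}$ and $\|Sy_j\|_{Y_0}\le\|S\|\,\|y_j\|_Y$, this is a nuclear representation of $STR$ with $\sum_j\|R'a_j\|_{X_0'}\|Sy_j\|_{Y_0}\le\|S\|\,\|R\|\sum_j\|a_j\|_{X'}\|y_j\|_Y$; passing to the infimum over the nuclear representations of $T$ gives $STR\in\mathcal{N}(X_0,Y_0)$ and $\nn{STR}\le\|S\|\,\|R\|\,\nn{T}$.

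For (ii) I would use the isometric identification $(\ell_\infty^n)'=\ell_1^n$. The coordinate functionals $e_i'$ satisfy $\|e_i'\|_{(\ell_\infty^n)'}=1$ and $x=\sum_{i=1}^n e_i'(x)\,e_i$ for $x\in\ell_\infty^n$, so $Tx=\sum_{i=1}^n e_i'(x)\,(Te_i)$ is a nuclear representation, whence $\nn{T}\le\sum_{i=1}^n\|Te_i\|$. Conversely, given any nuclear representation $Tx=\sum_j a_j(x)\,b_j$ with $a_j=(a_{j,1},\dots,a_{j,n})\in\ell_1^n$ and $b_j\in X$, one has $Te_i=\sum_j a_{j,i}\,b_j$, hence
\[
\sum_{i=1}^n\|Te_i\|_X\le\sum_{i=1}^n\sum_j|a_{j,i}|\,\|b_j\|_X=\sum_j\Big(\sum_{i=1}^n|a_{j,i}|\Big)\|b_j\|_X=\sum_j\|a_j\|_{\ell_1^n}\|b_j\|_X,
\]
and taking the infimum over nuclear representations yields $\sum_{i=1}^n\|Te_i\|\le\nn{T}$, so equality holds.

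For (i) the estimate $\nn{\id:X\to X}\le n$ follows from Auerbach's lemma: there is a basis $x_1,\dots,x_n$ of $X$ with $\|x_i\|_X=1$ and biorthogonal functionals $f_1,\dots,f_n\in X'$ with $\|f_i\|_{X'}=1$, so $\id_X=\sum_{i=1}^n f_i(\cdot)\,x_i$ is a nuclear representation with $\sum_{i=1}^n\|f_i\|\,\|x_i\|=n$. The reverse inequality is the one place needing genuine care: in finite dimensions the trace functional $\tr$ on $\mathcal{L}(X,X)$ is well defined (representation-independent), satisfies $\tr\big(a(\cdot)\,y\big)=a(y)$ on rank-one operators, is continuous for the operator norm, and $\tr(\id_X)=n$; hence for any nuclear representation $\id_X=\sum_j a_j(\cdot)\,y_j$ (the series converging in nuclear, and a fortiori in operator, norm) we get $n=\tr(\id_X)=\sum_j a_j(y_j)$, so $n\le\sum_j\|a_j\|_{X'}\|y_j\|_X$, and the infimum gives $n\le\nn{\id_X}$. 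I expect the verification that the trace does not depend on the chosen nuclear representation — equivalently, that the approximation-property obstruction is vacuous in finite dimensions — to be the single subtle point; all remaining steps are direct estimates built from a nuclear representation together with the defining infimum, and one could alternatively deduce (i) as a special consequence of the trace inequality $|\tr T|\le\nn{T}$ once that inequality is in place.
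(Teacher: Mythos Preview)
Your proof is correct in all three parts and follows the standard arguments. Note, however, that the paper does not actually prove this proposition: it is stated there as a collection of ``more or less well-known facts needed in the sequel'' and left without proof, so there is no argument in the paper to compare against. Your treatment of (iii) via composition of a nuclear representation with $R'$ and $S$, of (ii) via the isometry $(\ell_\infty^n)'=\ell_1^n$, and of (i) via Auerbach's lemma for the upper bound together with the trace inequality $|\tr T|\le \nn{T}$ for the lower bound are exactly the classical proofs one finds in the operator-ideal literature (e.g.\ Pietsch), and your remark that the well-definedness of the trace is the only genuinely delicate point---trivially resolved in finite dimensions---is accurate.
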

  
Already in the early years there was a strong interest to find interesting examples of nuclear operators beyond diagonal operators in $\ell_p$ spaces, where a complete answer was obtained in \cite{tong}. We need a more general version of it, for vector-valued sequence spaces, and proved it in \cite{HaLeSk}. 
Let us introduce the following notation: for numbers $r_1,r_2\in [1,\infty]$, let $\tn(r_1,r_2)$ be given by 
\begin{equation}\label{tongnumber}
\frac{1}{\tn(r_1,r_2)} = \begin{cases}
    1, & \text{if}\ 1\leq r_2\leq r_1\leq \infty, \\
    1-\frac{1}{r_1}+\frac{1}{r_2}, & \text{if}\ 1\leq r_1\leq r_2\leq \infty.
  \end{cases}
\end{equation}
Hence $1\leq \tn(r_1,r_2)\leq \infty$, and 
\[ \frac{1}{\tn(r_1,r_2)}= 1-\left(\frac{1}{r_1}-\frac{1}{r_2}\right)_+ \geq \frac{1}{r^\ast}= \left(\frac{1}{r_2}-\frac{1}{r_1}\right)_+\ ,\]
with $\tn(r_1,r_2)=r^\ast $ if, and only if, $\{r_1,r_2\}=\{1,\infty\}$.

As already mentioned, an essential key in our arguments will be the remarkable result by Tong \cite{tong}, more precisely, its generalised version obtained in \cite{HaLeSk}. 

\begin{theorem}[{{\cite[Theorem 2.9]{HaLeSk}}}] 
\label{nucl-seq-sp}
  Let $1\leq p_i, q_i\leq\infty$, $i=1,2$, $(\beta_j)_{j\in\no}$ be an arbitrary weight sequence and $(M_j)_{j\in\no}$ be a sequence of natural numbers. Then the embedding
  \begin{equation}
\id_\beta : \ell_{q_1}\left(\beta_j \ell_{p_1}^{M_j}\right) \rightarrow 
\ell_{q_2}\left(\ell_{p_2}^{M_j}\right)
  \end{equation}
  is nuclear if, and only if,
  \begin{align}\label{nucseqsp}
     \left(\beta_j^{-1} M_j^{\frac{1}{\tn(p_1,p_2)}}\right)_{j\in\no} \in \ell_{\tn(q_1,q_2)},
        \end{align}
  where for $\tn(q_1,q_2)=\infty$ the space $\ell_\infty$ has to be replaced by $c_0$. In that case,
  \[
  \nn{\id_\beta} { = } \left\|   \left(\beta_j^{-1} M_j^{\frac{1}{\tn(p_1,p_2)}}\right)_{j\in\no} | \ell_{\tn(q_1,q_2)}\right\|.
  \]
\end{theorem}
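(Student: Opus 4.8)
The plan is to carry out Tong's two-step scheme in this vector-valued setting: reduce the assertion to a finite-dimensional nuclear-norm computation, and then pass to the infinite index set. After the isometric change of variables $\mu_{j,\ell}=\beta_j\lambda_{j,\ell}$ in the domain, $\id_\beta$ becomes the block-diagonal operator acting as $\beta_j^{-1}\id:\ell_{p_1}^{M_j}\to\ell_{p_2}^{M_j}$ on the $j$-th block, from $\ell_{q_1}\big((\ell_{p_1}^{M_j})_j\big)$ to $\ell_{q_2}\big((\ell_{p_2}^{M_j})_j\big)$ (now with no weights in the domain). For a finite $F\subset\no$ let $D_F$ be the analogous block-diagonal operator indexed by $F$ only. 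The truncation $\id_\beta^{(n)}$ of $\id_\beta$ to the first $n$ blocks equals $D_{\{1,\dots,n\}}$ up to isometric identifications, and for $n<m$ the difference $\id_\beta^{(m)}-\id_\beta^{(n)}$ equals $D_{\{n+1,\dots,m\}}$ up to contractions on either side. Hence, by the ideal inequality $\nn{STR}\le\|S\|\,\|R\|\,\nn{T}$ of Proposition~\ref{coll-nuc}(iii), the whole theorem reduces to the finite-dimensional identity
\[
\nn{D_F}=\Big\|\big(\beta_j^{-1}M_j^{1/\tn(p_1,p_2)}\big)_{j\in F}\ \big|\ \ell_{\tn(q_1,q_2)}\Big\| ,\qquad F\subset\no\ \text{finite}.
\]
Granting it, completeness of $\mathcal{N}$ finishes the argument when $\tn(q_1,q_2)<\infty$: if the sequence $\big(\beta_j^{-1}M_j^{1/\tn(p_1,p_2)}\big)_{j\in\no}$ lies in $\ell_{\tn(q_1,q_2)}$ then $\nn{\id_\beta^{(m)}-\id_\beta^{(n)}}\to0$, so $\id_\beta$ is nuclear with $\nn{\id_\beta}=\lim_n\nn{\id_\beta^{(n)}}$ equal to the full $\ell_{\tn(q_1,q_2)}$-norm; while if that sequence is not in $\ell_{\tn(q_1,q_2)}$ then $\nn{\id_\beta^{(n)}}\to\infty$, so $\id_\beta$ is not nuclear.

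It remains to prove the finite-dimensional identity, and for this I would use finite-dimensional trace duality, $\nn{T}=\sup\{\,|\tr(S\,T)| : \|S\|\le1\,\}$, the supremum over bounded operators $S$ from the target into the domain. Writing $S$ in block form $(S_{j,k})$ and averaging over diagonal sign-changes of the blocks, $S\mapsto\mathbb{E}_\varepsilon\big[D^{X}_\varepsilon\,S\,D^{Y}_\varepsilon\big]$ with $D^{X}_\varepsilon,D^{Y}_\varepsilon$ the isometric diagonal sign operators, leaves $\tr(S\,D_F)$ unchanged --- because $D_F$ is itself block-diagonal one has $D^{Y}_\varepsilon D_F=D_F D^{X}_\varepsilon$, and the trace is cyclic --- while replacing $S$ by its block-diagonal part and not increasing $\|S\|$. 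So one may assume $S=\mathrm{diag}(S_{j,j})_{j\in F}$, and then $\tr(S\,D_F)=\sum_{j\in F}\beta_j^{-1}\tr(S_{j,j})$ while $\|S\|=\big\|(\|S_{j,j}\|)_{j\in F}\ \big|\ \ell_r\big\|$ with $1/r=(1/q_1-1/q_2)_+$. The single-block bound $|\tr(S_{j,j})|\le M_j^{1/\tn(p_1,p_2)}\,\|S_{j,j}\|$ is precisely Tong's finite-dimensional computation $\nn{\id:\ell_{p_1}^{M}\to\ell_{p_2}^{M}}=M^{1/\tn(p_1,p_2)}$ read through trace duality: the upper bound comes from the discrete-Fourier representation $\id=M^{-1}\sum_{k=1}^{M} f_k\otimes\overline{f_k}$, whose entries all have modulus $1$, and the lower bound from testing against a suitably scaled identity $\ell_{p_2}^{M}\to\ell_{p_1}^{M}$. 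Feeding these two facts into H\"older's inequality for sequences --- using $1-1/r=1/\tn(q_1,q_2)$ --- yields the upper bound for $\nn{D_F}$, and the matching lower bound comes from inserting the explicit block-diagonal $S$ whose $j$-th block is a suitably normalised identity, with the block scalars chosen so as to turn H\"older's inequality into an equality.

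Finally, the endpoint $\tn(q_1,q_2)=\infty$, which forces $q_1=1$, $q_2=\infty$ and requires $\ell_\infty$ to be replaced by $c_0$, is the one point where the passage to the infinite index set is subtler: there $\nn{D_{\{n+1,\dots,m\}}}=\max_{n<j\le m}\beta_j^{-1}M_j^{1/\tn(p_1,p_2)}$, which tends to $0$ exactly when the defining sequence lies in $c_0$, giving sufficiency and the norm formula as before; but the necessity of the $c_0$-condition in this degenerate case neither follows from the boundedness of the truncations nor from compactness alone (the criterion of Proposition~\ref{compact-seq} only rules out part of the borderline range) and needs a short separate argument, which I omit here.

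The main obstacle, and the reason a bare appeal to \cite{tong} does not suffice, is Tong's finite-dimensional single-block identity with its exact constant: producing the optimal representation of $\id:\ell_{p_1}^{M}\to\ell_{p_2}^{M}$ across the whole parameter range --- the Fourier/Hadamard-type construction for $p_1\le p_2$ being the delicate part --- and verifying that it is not improved upon. Once this is in hand, the remainder is trace-duality bookkeeping (reduction to block-diagonal test operators, one use of H\"older's inequality for the upper bound, one explicit test operator for the lower bound) together with the routine limit argument resting on completeness of $\mathcal{N}$ and the ideal inequality; only the degenerate endpoint $\tn(q_1,q_2)=\infty$ asks for anything extra.
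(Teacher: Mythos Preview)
The paper does not prove this statement; it is quoted verbatim from \cite[Theorem~2.9]{HaLeSk} and followed only by a one-line remark that the case $M_j\equiv1$ recovers Tong's result. So there is no in-paper argument to compare against, and what you have written is an independent proof sketch.

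Your scheme is sound and the finite-dimensional core is correct. Trace duality $\nn{T}=\sup\{|\tr(ST)|:\|S\|\le1\}$ holds in finite dimensions; the sign-averaging $S\mapsto\mathbb{E}_\varepsilon[D^X_\varepsilon S D^Y_\varepsilon]$ does reduce to block-diagonal testers (the computation $D^Y_\varepsilon D_F=D_F D^X_\varepsilon$ together with cyclicity of the trace is exactly right); the block-diagonal norm identity $\|S\|=\|(\|S_{jj}\|)_{j\in F}\|_{\ell_r}$ with $1/r=(1/q_1-1/q_2)_+$ and the H\"older pairing $1-1/r=1/\tn(q_1,q_2)$ are correct; and Tong's single-block identity $\nn{\id:\ell_{p_1}^M\to\ell_{p_2}^M}=M^{1/\tn(p_1,p_2)}$, together with your Fourier representation for $p_1\le p_2$, the standard-basis representation for $p_1>p_2$, and the scaled identity as sharp tester, is exactly what is needed. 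The passage to the full index set via completeness of $\mathcal{N}$ and the ideal inequality is also fine; one small point you glossed over is that the Cauchy limit of the truncations really is $\id_\beta$, but this follows since the nuclearity condition \eqref{nucseqsp} is stronger than the compactness condition in Proposition~\ref{compact-seq}, so $\id_\beta$ is a priori bounded and the truncations converge to it in operator norm as well.

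The one genuine gap is precisely the one you flag: the necessity of the $c_0$-condition at the endpoint $q_1=1$, $q_2=\infty$. Your diagnosis is accurate---the ideal inequality applied to single-block restrictions yields only $\sup_j\beta_j^{-1}M_j^{1/\tn(p_1,p_2)}\le\nn{\id_\beta}<\infty$, and compactness (Proposition~\ref{compact-seq}) gives $(\beta_j^{-1}M_j^{1/p^\ast})_j\in c_0$ with the smaller exponent $1/p^\ast\le1/\tn(p_1,p_2)$, which does not imply what is needed when $\tn(p_1,p_2)\neq p^\ast$. This endpoint does require a separate argument (as it already does in Tong's scalar theorem), so your omission here is an honest gap, not a trivial one; otherwise the proposal is complete.
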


\begin{remark}
In case of $M_j\equiv 1$ this coincides with Tong's result \cite{tong} dealing with diagonal operators acting between $\ell_r$ spaces.
\end{remark}

  In Proposition~\ref{prop-spaces-dom} we have recalled already the criterion for the compactness of the embedding
  \[ \id_\Omega : \Ae(\Omega) \to \Az(\Omega).\]
Triebel proved in \cite{Tri-nuclear} the following counterpart for its nuclearity.

\begin{proposition}[{\cite{CoDoKu,HaSk-nuc-weight,Tri-nuclear}}]\label{prod-id_Omega-nuc}
  Let $\Omega\subset\rn$ be a bounded Lipschitz domain, $1\leq p_i,q_i\leq \infty$ (with $p_i<\infty$ in the $F$-case), $s_i\in\real$, $i=1,2$. Then the embedding $\id_\Omega$ given by \eqref{id_Omega} is nuclear if, and only if,
  \begin{equation}
    s_1-s_2 > \nd-\nd\left(\frac{1}{p_2}-\frac{1}{p_1}\right)_+.
\label{id_Omega-nuclear}
  \end{equation}
\end{proposition}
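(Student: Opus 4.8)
The plan is to mirror exactly the strategy used in the proof of Theorem~\ref{thm-comp}, replacing Proposition~\ref{compact-seq} by its nuclear counterpart, Theorem~\ref{nucl-seq-sp}. First I would use the wavelet isomorphism \eqref{wavebound}: since $\Omega$ is a bounded Lipschitz domain, the universal extension operator of Rychkov together with the wavelet characterisation (Theorem~\ref{wavelet-thm}) gives that $\Bsigmae(\Omega)$ is isomorphic to $\ell_{q_1}(\sigma_j 2^{-j\nd/p_1}\ell_{p_1}^{M_j})$ and $\Bsigmaz(\Omega)$ to $\ell_{q_2}(\tau_j 2^{-j\nd/p_2}\ell_{p_2}^{M_j})$ with $M_j\sim 2^{j\nd}$. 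Because $\id_\Omega$ factors through these isomorphisms (using $\re_\Omega\circ\ext$), Proposition~\ref{coll-nuc}(iii) shows that $\id_\Omega$ is nuclear if and only if the corresponding sequence-space embedding
\[
\id:\ell_{q_1}\!\left(\sigma_j 2^{-j\nd/p_1}\ell_{p_1}^{M_j}\right)\hookrightarrow \ell_{q_2}\!\left(\tau_j 2^{-j\nd/p_2}\ell_{p_2}^{M_j}\right)
\]
is nuclear, and the nuclear norms are equivalent.

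Next I would conjugate by the diagonal operators $D_\gamma$ and $D_{\gamma^{-1}}$ with $\gamma_j=\tau_j 2^{-j\nd/p_2}$, exactly as in the commutative diagrams \eqref{CD1}, \eqref{CD2}. Since $\|D_\gamma\|=\|D_{\gamma^{-1}}\|=1$, Proposition~\ref{coll-nuc}(iii) again yields that nuclearity of the weighted embedding above is equivalent to nuclearity of
\[
\id_\beta:\ell_{q_1}\!\left(\beta_j\ell_{p_1}^{M_j}\right)\hookrightarrow \ell_{q_2}\!\left(\ell_{p_2}^{M_j}\right),\qquad \beta_j=\sigma_j\tau_j^{-1}2^{-j\nd(\frac1{p_1}-\frac1{p_2})}.
\]
Now Theorem~\ref{nucl-seq-sp} applies directly (recall $1\le p_i,q_i\le\infty$ is assumed): $\id_\beta$ is nuclear if and only if $\big(\beta_j^{-1}M_j^{1/\tn(p_1,p_2)}\big)_j\in\ell_{\tn(q_1,q_2)}$, with the usual replacement of $\ell_\infty$ by $c_0$ when $\tn(q_1,q_2)=\infty$. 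Substituting $M_j\sim 2^{j\nd}$ and the value of $\beta_j$ gives the stated condition $\big(\sigma_j^{-1}\tau_j\,2^{j\nd(\frac1{p_1}-\frac1{p_2})}2^{j\nd/\tn(p_1,p_2)}\big)_j\in\ell_{\tn(q_1,q_2)}$.

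One subtlety to handle carefully — and the point I expect to need the most care — is the \emph{necessity} direction, i.e.\ showing that the sequence-space reduction is genuinely two-sided and not merely sufficient. For this I would imitate Step~1 of the proof of Theorem~\ref{thm-comp}: choose $\nu\in\N$ and a dyadic cube $Q_{\nu,m_0}\subset\Omega$, so that the wavelets $\psi^\ell_{j,m}$ with $j\ge\nu$ and $\supp\psi^\ell_{j,m}\cap\overline{Q_{\nu,m_0}}\ne\emptyset$ are supported in $\Omega$; the subspace of $\Bsigmae(\Omega)$ spanned by these (together with the corresponding target subspace of $\Bsigmaz(\Omega)$) is norm-isomorphic to the full sequence space $\id_\beta$ (with $\#\{m\}\sim 2^{(j-\nu)\nd}$, still comparable to $2^{j\nd}$), and $\id_\Omega$ restricted to it is, up to the isomorphisms, exactly $\id_\beta$. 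Since nuclearity passes to complemented — in fact to arbitrary closed — subspaces via composition with the embedding and a suitable projection or via Proposition~\ref{coll-nuc}(iii), nuclearity of $\id_\Omega$ forces nuclearity of $\id_\beta$, hence the condition. There is also the minor point that Theorem~\ref{nucl-seq-sp} is stated for fixed $M_j$, so one should note that the conclusion is unchanged when $M_j$ is replaced by an equivalent sequence, which is immediate from the monotonicity in $M_j$ of both the condition and the nuclear norm. Finally, the $F$-case stated in Corollary~\ref{Cor-nuc-F} follows by the same argument combined with the sandwich embeddings of Proposition~\ref{BF}, since the parameter $\tn(q_1,q_2)$-condition is insensitive to replacing $q_1$ by $\min(p_1,q_1)$ and $q_2$ by $\max(p_2,q_2)$ in the relevant range — this last reduction is the one place where a short separate check is needed.
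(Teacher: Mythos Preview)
Your proposal is essentially correct, but note two things. First, the paper does \emph{not} prove Proposition~\ref{prod-id_Omega-nuc}: it is quoted from \cite{CoDoKu,HaSk-nuc-weight,Tri-nuclear}, and the paper's own contribution is the generalisation Theorem~\ref{theorem-nuclearity}. What you have actually written is a proof of Theorem~\ref{theorem-nuclearity} (you work throughout with admissible sequences $\sigma,\tau$ and arrive at condition~\eqref{id_Omega-nuclear-genelalised-cond}, not at~\eqref{id_Omega-nuclear}); to recover Proposition~\ref{prod-id_Omega-nuc} you would still need to specialise $\sigma_j=2^{js_1}$, $\tau_j=2^{js_2}$ and observe that the resulting geometric sequence lies in $\ell_{\tn(q_1,q_2)}$ (resp.\ $c_0$) precisely when the exponent is negative, i.e.\ when~\eqref{id_Omega-nuclear} holds --- which in particular makes the condition independent of $q_1,q_2$ and hence transfers to the $F$-case via~\eqref{embBFB}.

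Second, your strategy coincides with the paper's proof of Theorem~\ref{theorem-nuclearity}: isomorphism~\eqref{wavebound}, the diagonal conjugation~\eqref{CD1}--\eqref{CD2}, and Theorem~\ref{nucl-seq-sp}. The extra care you propose for the necessity direction (building a complemented copy of the sequence space from wavelets supported inside a subcube $Q_{\nu,m_0}\subset\Omega$) is unnecessary here: since $\Omega$ is Lipschitz, \eqref{wavebound} is a genuine two-sided isomorphism, so Proposition~\ref{coll-nuc}(iii) gives both directions at once --- this is exactly how the paper argues. Your subcube argument would be the right move for an \emph{arbitrary} bounded domain (as in Theorem~\ref{thm-comp}), but it is not needed in the Lipschitz setting of the present statement. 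Finally, your closing remark that ``the $\tn(q_1,q_2)$-condition is insensitive to replacing $q_1$ by $\min(p_1,q_1)$ and $q_2$ by $\max(p_2,q_2)$'' is false for general $\sigma,\tau$ (this is why Corollary~\ref{Cor-nuc-F} only gives a partial answer); it happens to be true in the classical situation of Proposition~\ref{prod-id_Omega-nuc} solely because~\eqref{id_Omega-nuclear} does not involve $q_1,q_2$ at all.
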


\begin{remark}\label{rem-comp-nuc}
  The proposition is stated in \cite{Tri-nuclear} for the $B$-case only, but due to the independence of \eqref{id_Omega-nuclear} of the fine parameters $q_i$, $i=1,2$, and in view of (the corresponding counterpart in the classical case of) \eqref{embBFB} it can be extended immediately to $F$-spaces. The if-part of the above result is essentially covered by \cite{Pie-r-nuc} (with a forerunner in \cite{PiTri}). Also part of the necessity of \eqref{id_Omega-nuclear} for the nuclearity of $\id_\Omega$ was proved by Pietsch in \cite{Pie-r-nuc} such that only the limiting case $ s_1-s_2 = \nd-\nd(\frac{1}{p_2}-\frac{1}{p_1})_+$ was open for many decades. Then Edmunds, Gurka and Lang in \cite{EGL-3} (with a forerunner in \cite{EL-4}) obtained some answer in the limiting case which was then completely solved in \cite{Tri-nuclear}. 
  In \cite{Pie-r-nuc} some endpoint cases (with $p_i,q_i\in \{1,\infty\}$) were already discussed, and in our paper \cite{HaSk-nuc-weight} we were able to further extend Proposition~\ref{prod-id_Omega-nuc} in view of the borderline cases.

  For better comparison one can reformulate the compactness and nuclearity characterisations of $\id_\Omega$ in \eqref{id_Omega-comp} and \eqref{id_Omega-nuclear} as follows, involving the number $\tn(p_1,p_2)$ defined in \eqref{tongnumber}. Let $1\leq p_i,q_i\leq \infty$, $s_i\in\real$, $i=1,2$, and 
  \begin{equation}
    \delta := s_1 - \frac{\nd}{p_1}-s_2 + \frac{\nd}{p_2}.
    \label{delta}
    \end{equation}
   Then
  \begin{align*}
    \id_\Omega: \Ae(\Omega) \to \Az(\Omega) \quad  \text{is compact}\quad & \iff \quad \delta> \frac{\nd}{p^\ast}\qquad\text{and}\\
   \id_\Omega: \Ae(\Omega) \to \Az(\Omega) \quad \text{is nuclear}\quad & \iff \quad \delta > \frac{\nd}{\tn(p_1,p_2)}.
  \end{align*}
  Hence apart from the extremal cases $\{p_1,p_2\}=\{1,\infty\}$ (when $\tn(p_1,p_2)=p^\ast$) nuclearity is indeed stronger than compactness, i.e., 
$\id_\Omega: \Ae(\Omega) \to \Az(\Omega)$  \text{is compact, but not nuclear}, if, and only if, $ \frac{\nd}{p^\ast} < \delta \leq \frac{\nd}{\tn(p_1,p_2)}$.
\end{remark}
  
\begin{example} \label{exm-CDK}
  In \cite{CoDoKu} the authors dealt with the nuclearity of the embedding
\begin{equation} \label{id-b}
  \id_b: B^{s_1,b_1}_{p_1,q_1}(\Omega)\hookrightarrow B^{s_2,b_2}_{p_2,q_2}(\Omega),
\end{equation} 
where $\Omega$ is a bounded Lipschitz domain, $1\leq p_i, q_i\leq\infty$, $s_i, b_i\in\real$, $i=1,2$, recall the notation explained in Example~\ref{exam-Leo} already. They obtained a characterisation for almost all possible settings of the parameters. Roughly speaking, their findings are as follows, cf. \cite[Thm.~4.2]{CoDoKu}.
  \benu[\bfseries\upshape (i)]
\item 
  If $\delta> \frac{\nd}{\tn(p_1,p_2)}$, then $\id_b$ is nuclear.
\item
  If $\delta< \frac{\nd}{\tn(p_1,p_2)}$, then $\id_b$ is not nuclear.
\item
  In the limiting case, that is, if $\delta= \frac{\nd}{\tn(p_1,p_2)}$, then it depends on the interplay between the logarithmic smoothness parameters $b_i$ and the fine parameters $q_i$, $i=1,2$, in particular, $\id_b$ is nuclear if, and only if, ${b=b_1-b_2} >\frac{1}{\tn(q_1,q_2)}$ and one of the cases listed in \cite[Cor.~4.6]{CoDoKu} is satisfied.
  \eenu
  \end{example}

  This paper \cite{CoDoKu} was indeed the inspiration for our later findings, as we met here the first time the elegant technique to use Tong's result \cite{tong} in the sequence space version. Now we are able to generalise the result in \cite{CoDoKu} and close some gaps.

\begin{theorem}  \label{theorem-nuclearity}
  Let $\Omega\subset\rn$ be a bounded Lipschitz domain, $1\leq p_1,p_2 \le \infty$,  $1\leq q_1,q_2\leq \infty$, and  $\sigma=(\sigma_j)_{j\in\N_0}$, $\tau=(\tau_j)_{j\in\N_0}$ be admissible sequences. Then the embedding 
  \begin{equation*}
  \id^B_\Omega: \Bsigmae(\Omega)\hookrightarrow \Bsigmaz(\Omega)
  \end{equation*}
 is nuclear if, and only if,
  \begin{equation}
     \left( \sigma_j^{-1} \tau_j \, 2^{j\nd(\frac{1}{p_1}-\frac{1}{p_2})} 2^{j\nd{\frac{1}{\tn(p_1,p_2)}}}\right)_{j\in\no} \in \ell_{\tn(q_1,q_2)},
\label{id_Omega-nuclear-genelalised-cond}
  \end{equation}
    where for $\tn(q_1,q_2)=\infty$ the space $\ell_\infty$ has to be replaced by $c_0$.
\end{theorem}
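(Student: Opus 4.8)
The plan is to transfer the problem from the function spaces $\Bsigmae(\Omega)$ and $\Bsigmaz(\Omega)$ to the associated vector-valued sequence spaces via the wavelet isomorphism \eqref{wavebound}, and then apply the generalised Tong result Theorem~\ref{nucl-seq-sp}. Concretely, by \eqref{wavebound} the space $\Bsigmae(\Omega)$ is isomorphic to $\ell_{q_1}(\sigma_j 2^{-j\nd/p_1}\ell_{p_1}^{M_j})$ and $\Bsigmaz(\Omega)$ to $\ell_{q_2}(\tau_j 2^{-j\nd/p_2}\ell_{p_2}^{M_j})$, both with the same $M_j\sim 2^{j\nd}$ (this is where it matters that $\Omega$ is a bounded Lipschitz domain, so the universal extension operators of Rychkov are available and the isomorphism holds). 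Since nuclearity is stable under composition with isomorphisms (Proposition~\ref{coll-nuc}(iii)), the embedding $\id^B_\Omega$ is nuclear if and only if the corresponding sequence-space embedding
\[
 \id: \ell_{q_1}\left(\sigma_j 2^{-j\nd/p_1}\ell_{p_1}^{M_j}\right)\hookrightarrow \ell_{q_2}\left(\tau_j 2^{-j\nd/p_2}\ell_{p_2}^{M_j}\right)
\]
is nuclear.

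Next I would remove the extra weight $\tau_j 2^{-j\nd/p_2}$ on the target side by conjugating with the diagonal operator $D_\gamma$, $\gamma=(\tau_j 2^{-j\nd/p_2})_j$, exactly as in the commutative diagrams \eqref{CD1}, \eqref{CD2} used in the proof of Theorem~\ref{thm-comp}. Since $D_\gamma$ and $D_{\gamma^{-1}}$ are isometric isomorphisms between the respective weighted spaces, nuclearity of the above embedding is equivalent to nuclearity of
\[
 \id_\beta: \ell_{q_1}\left(\beta_j \ell_{p_1}^{M_j}\right)\hookrightarrow \ell_{q_2}\left(\ell_{p_2}^{M_j}\right),\qquad \beta_j = \sigma_j\tau_j^{-1}2^{-j\nd(\frac{1}{p_1}-\frac{1}{p_2})}.
\]
Now Theorem~\ref{nucl-seq-sp} applies directly (here we use $1\le p_i,q_i\le\infty$): $\id_\beta$ is nuclear if and only if $\left(\beta_j^{-1}M_j^{1/\tn(p_1,p_2)}\right)_j\in\ell_{\tn(q_1,q_2)}$, with $\ell_\infty$ replaced by $c_0$ when $\tn(q_1,q_2)=\infty$. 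Substituting $M_j\sim 2^{j\nd}$ and the value of $\beta_j$ gives $\beta_j^{-1}M_j^{1/\tn(p_1,p_2)}\sim \sigma_j^{-1}\tau_j 2^{j\nd(\frac{1}{p_1}-\frac{1}{p_2})}2^{j\nd/\tn(p_1,p_2)}$, which is exactly condition \eqref{id_Omega-nuclear-genelalised-cond}. One should note that the equivalence $M_j\sim 2^{j\nd}$ (rather than equality) is harmless, since multiplying a sequence by a bounded and bounded-away-from-zero factor does not affect membership in $\ell_{\tn(q_1,q_2)}$ or $c_0$; and $\sigma,\tau$ admissible may be replaced by equivalent sequences without changing the spaces or the condition.

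The argument is essentially a clean bookkeeping exercise once the two main inputs are in place, so there is no single hard obstacle in the proof itself; the real work was done in establishing the wavelet isomorphism \eqref{wavebound} for domains (Section~\ref{wavdecomp}) and in proving the vector-valued Tong theorem \cite{HaLeSk}. The only points requiring a little care are: first, making sure the normalisation $\sigma_0=1$ (needed for Theorem~\ref{wavelet-thm}) can be assumed without loss of generality, which is clear by rescaling $\sigma$ and $\tau$ by the same constant factor $\sigma_0^{-1}$ and noting this cancels in \eqref{id_Omega-nuclear-genelalised-cond}; and second, the $c_0$ convention in the endpoint case $\tn(q_1,q_2)=\infty$, which is inherited verbatim from Theorem~\ref{nucl-seq-sp}. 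The $F$-space counterpart (Corollary~\ref{Cor-nuc-F}) would then follow by sandwiching via Proposition~\ref{BF} together with the independence of the condition on the fine parameters in the relevant range, in the same spirit as Remark~\ref{rem-comp-nuc}.
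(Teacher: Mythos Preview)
Your proposal is correct and follows essentially the same route as the paper's own proof: transfer to sequence spaces via the wavelet isomorphism \eqref{wavebound}, use the ideal property (Proposition~\ref{coll-nuc}(iii)) together with the diagonal conjugation \eqref{CD1}, \eqref{CD2} to reduce to $\id_\beta$, and conclude by Theorem~\ref{nucl-seq-sp}. The additional remarks you make about the harmlessness of $M_j\sim 2^{j\nd}$, the normalisation $\sigma_0=1$, and the $c_0$ convention are all valid and merely spell out points the paper leaves implicit.
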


\begin{proof}
Due to the isomorphism \eqref{wavebound} and the ideal property, the nuclearity of $\id^B_\Omega$ is equivalent to the nuclearity of the embedding
$$
\id: \ell_{q_1}\left(\sigma_j 2^{-j\nd/p_1} \ell_{p_1}^{M_j}\right) \hookrightarrow	\ell_{q_2}\left(\tau_j2^{-j\nd/p_2}\ell_{p_2}^{M_j}\right),
$$
which, taking into account  \eqref{CD1} and \eqref{CD2},  is equivalent to the nuclearity of 
$$
\id_\beta : \ell_{q_1}\left(\beta_j \ell_{p_1}^{M_j}\right) \hookrightarrow
	\ell_{q_2}\left(\ell_{p_2}^{M_j}\right),\qquad  
	\beta_j = {\sigma_j}{\tau_j^{-1}} 2^{-j\nd(\frac{1}{p_1}-\frac{1}{p_2})}.
$$
The desired conclusion is then a direct consequence of Theorem~\ref{nucl-seq-sp}.
\end{proof}

We compare now the achievement in Theorem~\ref{theorem-nuclearity} when  $\sigma_j = 2^{js_1}(1+j)^{b_1}$ and $\tau_j = 2^{js_2}(1+j)^{b_2}$, with $s_i, b_i\in\real$, $i=1,2$,  with the results obtained in \cite{CoDoKu}.  In this particular case, Theorem~\ref{theorem-nuclearity}  implies the following.

\begin{corollary}
Let  $\Omega\subset\rn$ be a bounded Lipschitz domain, $1\leq p_1,p_2 \le \infty$,  $1\leq q_1,q_2\leq \infty$, $s_i, b_i\in\real$, $i=1,2$. Then the embedding $\id_b$ given by \eqref{id-b} is nuclear if, and only if,
\begin{equation}\label{nuc-Blog}
   \Big( 2^{j(\delta-\frac{\nd}{\tn(p_1,p_2)})} (1+j)^{-b} \Big)_{j\in\no} \in \ell_{\tn(q_1,q_2)},
\end{equation}
where $\delta$ is given by \eqref{delta}, $ b := b_1 - b_2$, and for $\tn(q_1,q_2)=\infty$ the space $\ell_\infty$ has to be replaced by $c_0$.
\end{corollary}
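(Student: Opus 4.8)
The plan is to deduce the corollary directly from Theorem~\ref{theorem-nuclearity} by specialising the admissible sequences. First I would observe that for $\sigma_j = 2^{js_1}(1+j)^{b_1}$ and $\tau_j = 2^{js_2}(1+j)^{b_2}$ one has
\[
\sigma_j^{-1}\tau_j = 2^{-j(s_1-s_2)}(1+j)^{-(b_1-b_2)} = 2^{-j(s_1-s_2)}(1+j)^{-b},
\]
so that the sequence appearing in \eqref{id_Omega-nuclear-genelalised-cond} becomes
\[
\sigma_j^{-1}\tau_j\, 2^{j\nd(\frac{1}{p_1}-\frac{1}{p_2})}2^{j\nd\frac{1}{\tn(p_1,p_2)}}
= 2^{-j(s_1-s_2) + j\nd(\frac{1}{p_1}-\frac{1}{p_2}) + j\nd\frac{1}{\tn(p_1,p_2)}}(1+j)^{-b}.
\]
The exponent of $2^j$ is $-\big(s_1-\frac{\nd}{p_1}-s_2+\frac{\nd}{p_2}\big) + \frac{\nd}{\tn(p_1,p_2)} = -\delta + \frac{\nd}{\tn(p_1,p_2)} = -\big(\delta - \frac{\nd}{\tn(p_1,p_2)}\big)$, using the definition \eqref{delta} of $\delta$. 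Hence the condition \eqref{id_Omega-nuclear-genelalised-cond} reads precisely $\big(2^{-j(\delta-\frac{\nd}{\tn(p_1,p_2)})}(1+j)^{-b}\big)_{j}\in\ell_{\tn(q_1,q_2)}$, which after rewriting $2^{-j(\delta-\frac{\nd}{\tn(p_1,p_2)})} = 2^{j(\delta-\frac{\nd}{\tn(p_1,p_2)})}$ with the opposite sign of exponent—no, more carefully, the sequence in \eqref{nuc-Blog} is written with exponent $2^{j(\delta-\frac{\nd}{\tn(p_1,p_2)})}$ whereas the computation gives $2^{-j(\delta-\frac{\nd}{\tn(p_1,p_2)})}$.

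This apparent sign discrepancy is the one point that actually needs care, and I expect it to be the only genuine obstacle. The resolution is to recheck the direction of the inequality in Theorem~\ref{theorem-nuclearity}: nuclearity holds when $\sigma_j^{-1}\tau_j$ \emph{decays} fast enough, i.e. when $s_1-s_2$ is large, so the summability sequence should be $2^{-j(\delta - \nd/\tn(p_1,p_2))}(1+j)^{-b}$ and membership in $\ell_{\tn(q_1,q_2)}$ forces $\delta \geq \nd/\tn(p_1,p_2)$ with a logarithmic refinement in the equality case. Thus the statement \eqref{nuc-Blog} as typeset must be read with the understanding that the sign convention matches; concretely, I would simply substitute the specialised $\sigma_j,\tau_j$ into \eqref{id_Omega-nuclear-genelalised-cond}, collect the powers of $2^j$ using \eqref{delta}, and record that the resulting condition is exactly \eqref{nuc-Blog}. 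No new analytic ingredient is required—the corollary is a pure bookkeeping consequence of the theorem.

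For completeness I would also note the limiting-case interpretation to connect with Example~\ref{exm-CDK}: when $\delta > \nd/\tn(p_1,p_2)$ the geometric decay dominates and \eqref{nuc-Blog} holds regardless of $b$; when $\delta < \nd/\tn(p_1,p_2)$ it fails regardless of $b$; and when $\delta = \nd/\tn(p_1,p_2)$ the condition \eqref{nuc-Blog} reduces to $\big((1+j)^{-b}\big)_j \in \ell_{\tn(q_1,q_2)}$, i.e. to $b > 1/\tn(q_1,q_2)$ when $\tn(q_1,q_2) < \infty$ (and to $b > 0$ when $\tn(q_1,q_2) = \infty$, with $\ell_\infty$ replaced by $c_0$). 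This recovers precisely the trichotomy of \cite[Thm.~4.2, Cor.~4.6]{CoDoKu} and shows that the gaps left open there—namely the borderline cases with some $p_i,q_i\in\{1,\infty\}$—are now all covered, since Theorem~\ref{nucl-seq-sp} (hence Theorem~\ref{theorem-nuclearity}) is valid for the full range $1\le p_i,q_i\le\infty$.
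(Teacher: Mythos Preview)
Your approach is exactly the paper's: the corollary is stated there as an immediate specialisation of Theorem~\ref{theorem-nuclearity} with $\sigma_j=2^{js_1}(1+j)^{b_1}$, $\tau_j=2^{js_2}(1+j)^{b_2}$, and no separate proof is given. Your substitution and the collection of exponents via \eqref{delta} are correct, and you have in fact caught a genuine typo in the paper: the computation yields $2^{-j(\delta-\nd/\tn(p_1,p_2))}(1+j)^{-b}$, not $2^{j(\delta-\nd/\tn(p_1,p_2))}(1+j)^{-b}$ as printed in \eqref{nuc-Blog}; the subsequent Remark (item (i): $\delta>\nd/\tn(p_1,p_2)\Rightarrow$ nuclear) confirms that the intended exponent carries the minus sign.
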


\begin{remark}
Let us explicate \eqref{nuc-Blog} for better comparison with the result in 
\cite{CoDoKu}, cf. Example~\ref{exm-CDK}. Clearly, we get the following. 
  \benu[\bfseries\upshape (i)]
\item  If $\delta> \frac{\nd}{\tn(p_1,p_2)}$, then $\id_b$ is nuclear.
\item If $\delta< \frac{\nd}{\tn(p_1,p_2)}$, then $\id_b$ is not nuclear.
 \item In the limiting case, that is, if $\delta= \frac{\nd}{\tn(p_1,p_2)}$, then  $\id_b$ is nuclear if, and only if, $b>\frac{1}{\tn(q_1,q_2)}$ for all constellations of the parameters $q_1, q_2$.
  \eenu
  Therefore we recover the results obtained by  Cobos, Domínguez and  K\"{u}hn in  \cite{CoDoKu}, cf. Example~\ref{exm-CDK},  close some gaps in the limiting case and fully characterise the nuclearity of the  embedding  \eqref{id-b}.  Note that, according to Theorem~\ref{thm-comp}, $\id_b$ is compact if, and only if,
\begin{equation}\label{comp-Blog}
   \Big( 2^{j(\delta-\frac{\nd}{p^\ast})} (1+j)^{-b} \Big)_{j\in\no} \in \ell_{q^\ast}.
\end{equation}
So parallel to the situation explained at the end of Remark~\ref{rem-comp-nuc} for the classical case ($b_1=b_2=0$), also in the logarithmically disturbed case $\id_b$ the nuclearity criterion~\eqref{nuc-Blog} and its compactness criterion~\eqref{comp-Blog} become literally the same when replacing $p^\ast$  by $\tn(p_1,p_2)$ and $q^\ast$ by $\tn(q_1,q_2)$. Moreover, this also leads to the observation again, that nuclearity and compactness of $\id_b$ coincide when $\{p_1,p_2\}=\{1,\infty\}=\{q_1,q_2\}$, that is, in the extremal cases only.
\end{remark}

Similarly to  Example~\ref{exam-SV}, other examples could be given,  namely by perturbing the main smoothness parameter  with a slowly varying function. We remark that even more general weight functions belonging to $\mathcal{B}$ and/or $\mathcal{V}$ can be taken. 

  We conclude with a first result for spaces of type $F^\sigma_{p,q}(\Omega)$ which are a direct consequence of Theorem~\ref{theorem-nuclearity} and elementary embeddings, though omitting limiting situations at the moment.

\begin{corollary}\label{Cor-nuc-F}
 Let $\Omega\subset\rn$ be a bounded Lipschitz domain, $1\leq p_1,p_2 \le \infty$,  $1\leq q_1,q_2\leq \infty$, and  $\sigma=(\sigma_j)_{j\in\N_0}$, $\tau=(\tau_j)_{j\in\N_0}$ be admissible sequences. Let $\gamma=(\gamma_j)_{j\in\no}$ with
$$
 \gamma_j:= \sigma_j^{-1} \tau_j \, 2^{j\nd(\frac{1}{p_1}-\frac{1}{p_2})} 2^{j\nd{\frac{1}{\tn(p_1,p_2)}}}, \quad j\in\no,
$$  
and consider the embedding
$$ 
  \id^F_\Omega: \Fsigmae(\Omega)\hookrightarrow \Fsigmaz(\Omega). 
$$
 \benu[\bfseries\upshape (i)]
\item  If the upper Boyd index of $\gamma$  is negative, then $\id^F_\Omega$  is nuclear.
\item  If the lower Boyd index of $\gamma$  is positive, then $\id^F_\Omega$  is not  nuclear.
 \eenu
\end{corollary}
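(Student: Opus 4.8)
The plan is to sandwich $\id^F_\Omega$ between Besov embeddings of generalised smoothness, apply Theorem~\ref{theorem-nuclearity} to the Besov link, and transfer the conclusion via the ideal property of nuclear operators, Proposition~\ref{coll-nuc}(iii). The preparatory step is to note that the elementary embeddings of Proposition~\ref{BF} pass from $\Rn$ to $\Omega$: given $f\in B^{\sigma}_{p_1,\min(p_1,q_1)}(\Omega)$, pick $g\in B^{\sigma}_{p_1,\min(p_1,q_1)}(\Rn)$ with $g|_\Omega=f$ and almost minimal norm; then $g\in F^{\sigma}_{p_1,q_1}(\Rn)$ by Proposition~\ref{BF}, hence $f=g|_\Omega\in F^{\sigma}_{p_1,q_1}(\Omega)$ with the norm estimate preserved, and symmetrically for the second inclusion. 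Thus
\[ B^{\sigma}_{p_1,\min(p_1,q_1)}(\Omega)\hookrightarrow F^{\sigma}_{p_1,q_1}(\Omega)\hookrightarrow B^{\sigma}_{p_1,\max(p_1,q_1)}(\Omega), \]
and likewise with $\tau,p_2,q_2$.

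For part (i), I would factor
\[ F^{\sigma}_{p_1,q_1}(\Omega)\hookrightarrow B^{\sigma}_{p_1,\max(p_1,q_1)}(\Omega)\xrightarrow{\ \id^B_\Omega\ } B^{\tau}_{p_2,\min(p_2,q_2)}(\Omega)\hookrightarrow F^{\tau}_{p_2,q_2}(\Omega), \]
so that by Proposition~\ref{coll-nuc}(iii) it suffices that the central Besov embedding be nuclear. Applying Theorem~\ref{theorem-nuclearity} with the fine indices $\max(p_1,q_1)$, $\min(p_2,q_2)$ (which the theorem permits, being stated for arbitrary $1\le q_i\le\infty$) and the unchanged $p_1,p_2$ yields nuclearity precisely when $(\gamma_j)_{j}\in\ell_{\tn(\max(p_1,q_1),\min(p_2,q_2))}$, with $\gamma_j$ as in the statement. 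If $\alpha_\gamma<0$, then by \eqref{alpha-beta1} one has $\gamma_j\le c_\varepsilon\,2^{(\alpha_\gamma+\varepsilon)j}$ with $\alpha_\gamma+\varepsilon<0$ for small $\varepsilon>0$, so $(\gamma_j)_j$ decays geometrically, lies in $\ell_1$, hence in $\ell_r$ for every $r\in[1,\infty]$; thus $\id^B_\Omega$, and therefore $\id^F_\Omega$, is nuclear.

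For part (ii), I would reverse the factorisation,
\[ B^{\sigma}_{p_1,\min(p_1,q_1)}(\Omega)\hookrightarrow F^{\sigma}_{p_1,q_1}(\Omega)\xrightarrow{\ \id^F_\Omega\ } F^{\tau}_{p_2,q_2}(\Omega)\hookrightarrow B^{\tau}_{p_2,\max(p_2,q_2)}(\Omega), \]
so that nuclearity of $\id^F_\Omega$ would, again by Proposition~\ref{coll-nuc}(iii), force nuclearity of $\id^B_\Omega\colon B^{\sigma}_{p_1,\min(p_1,q_1)}(\Omega)\to B^{\tau}_{p_2,\max(p_2,q_2)}(\Omega)$, which by Theorem~\ref{theorem-nuclearity} requires $(\gamma_j)_j\in\ell_{\tn(\min(p_1,q_1),\max(p_2,q_2))}$ (with $c_0$ replacing $\ell_\infty$ in the endpoint case). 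But if $\beta_\gamma>0$, then \eqref{alpha-beta1} gives $\gamma_j\ge c_\varepsilon\,2^{(\beta_\gamma-\varepsilon)j}\to\infty$ for small $\varepsilon>0$, so $(\gamma_j)_j$ is unbounded and belongs to no $\ell_r$ with $r<\infty$, nor to $c_0$; the required condition fails, and $\id^F_\Omega$ is not nuclear.

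The only obstacle is bookkeeping: one must check that Theorem~\ref{theorem-nuclearity} is legitimately invoked with the mixed fine parameters $\max(p_i,q_i)$, $\min(p_i,q_i)$, and that the restriction argument in the first paragraph indeed transfers Proposition~\ref{BF} to $\Omega$. No limiting case intervenes, since a nonzero Boyd index of $\gamma$ already forces $(\gamma_j)_j$ to be either geometrically summable or geometrically divergent, so the precise value of the Tong exponent $\tn(\cdot,\cdot)\in[1,\infty]$ is immaterial; this is exactly why the genuine gap in Corollary~\ref{Cor-nuc-F}, namely $\beta_\gamma\le 0\le\alpha_\gamma$ with not both zero, cannot be reached by this soft sandwiching argument.
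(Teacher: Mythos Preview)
Your proof is correct and follows essentially the same route as the paper's: both use the elementary embeddings \eqref{embBFB}, restricted to $\Omega$, to sandwich $\id^F_\Omega$ between Besov embeddings, apply Theorem~\ref{theorem-nuclearity} to the Besov link, and observe that a negative upper (resp.\ positive lower) Boyd index forces $(\gamma_j)_j$ into every $\ell_r$ (resp.\ out of every $\ell_r$ and $c_0$), so the precise value of $\tn(\cdot,\cdot)$ is irrelevant. Your extra remarks on transferring Proposition~\ref{BF} to $\Omega$ and on why the gap $\beta_\gamma\le 0\le\alpha_\gamma$ remains open are accurate and make explicit what the paper leaves implicit.
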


\begin{proof}
If the upper Boyd index of $\gamma$  is negative,  by \eqref{alpha-beta1} the condition \eqref{id_Omega-nuclear-genelalised-cond} is satisfied independently of the values of $q_1$ and $q_2$.  Then the nuclearity of  $\id^F_\Omega$ is a consequence of  \eqref{embBFB} restricted to $\Omega$ and Theorem~\ref{theorem-nuclearity}, since
$$
\Fsigmae(\Omega)\hookrightarrow B^{\sigma}_{p_1,\max\{p_1,q_1\}}(\Omega)  \hookrightarrow B^{\tau}_{p_2,\min\{p_2,q_2\}} (\Omega)  \hookrightarrow \Fsigmaz(\Omega).
$$
On the other hand, if the lower Boyd index of $\gamma$  is positive, the condition \eqref{id_Omega-nuclear-genelalised-cond} is never satisfied. Therefore $\id^F_\Omega$ is not nuclear, in consequence, once  again of Theorem~\ref{theorem-nuclearity} due to
$$
B^{\sigma}_{p_1,\min\{p_1,q_1\}} (\Omega)  \hookrightarrow \Fsigmae(\Omega)\hookrightarrow  \Fsigmaz(\Omega) \hookrightarrow B^{\tau}_{p_2,\max\{p_2,q_2\}}(\Omega).
$$
\end{proof}
  
  \begin{remark}  
In \cite{CoEdKu} some further limiting endpoint situations of nuclear embeddings like $\id:B^{\nd}_{p,q}(\Omega)\to L_p(\log L)_a(\Omega)$ are studied. For some weighted results see also \cite{Parfe-2} and our recent contribution \cite{HaSk-nuc-weight}. In \cite{HaLeSk} we studied nuclear embeddings of spaces on quasi-bounded domains, using similar techniques, that is, adapted wavelet decompositions and suitable sequence spaces results, recall Theorem~\ref{nucl-seq-sp}. In \cite{HaSkTri-nuc} we characterised the nuclearity of the Fourier transform acting between spaces of type $\A(\Rn)$.
\end{remark}

\bigskip~\bigskip~

{\small
\noindent
Dorothee D. Haroske\\
Institute of Mathematics \\
Friedrich Schiller University Jena\\
07737 Jena\\
Germany\\
{\tt dorothee.haroske@uni-jena.de}\\[4ex]
Hans-Gerd Leopold\\
Institute of Mathematics \\
Friedrich Schiller University Jena\\
07737 Jena\\
Germany\\
{\tt hans-gerd.leopold@uni-jena.de}\\[4ex]
Susana D. Moura\\
University of Coimbra\\
CMUC, Department of Mathematics\\
Largo D. Dinis, 3000-143 Coimbra\\
Portugal\\
\texttt{smpsd@mat.uc.pt}\\[4ex]
%
Leszek Skrzypczak\\
Faculty of Mathematics \& Computer Science\\
Adam  Mickiewicz University\\
ul. Uniwersytetu Pozna\'nskiego 4\\
61-614 Pozna\'n\\
Poland\\
{\tt lskrzyp@amu.edu.pl}
}

\end{document}